\DeclareMathAlphabet\mathbfcal{OMS}{cmsy}{b}{n}
\newtheorem{definition}{Definition}[section]
\newtheorem{theorem}{\textbf{Theorem}}[section]
\newtheorem{lemma}[theorem]{\textbf{Lemma}}
\newtheorem{proposition}[theorem]{\textbf{Proposition}}
\newtheorem{corollary}[theorem]{\textbf{Corollary}}
\newtheorem{remark}[theorem]{\textbf{Remark}}
\theoremstyle{remark}
\numberwithin{equation}{section}
\title{Spin frame transformations and Dirac equations} 
\author{R.Noris$^{(1)(2)}$, L.Fatibene$^{(2)(3)}$}
\address{$^{(1)}$ DISAT, Politecnico di Torino,
C.so Duca degli Abruzzi 24, I-10129 Torino, Italy \\
$^{(2)}$INFN Sezione di Torino, 
Via Pietro Giuria 1, I-10125 Torino, Italy\\
$^{(3)}$ Dipartimento di Matematica -- University of Torino,\\
via Carlo Alberto 10, I-10123 Torino, Italy
}
\email{ruggero.noris at polito.it, lorenzo.fatibene at unito.it}
\begin{document}

\begin{abstract}
We define spin frames, with the aim of extending spin structures from the category of (pseudo-)Riemannian manifolds to the category of spin manifolds with a fixed signature on them, though with no selected metric structure. Because of this softer requirements, transformations allowed by spin frames are more general than usual spin transformations and they usually do not preserve the induced metric structures.\\
We study how these new transformations affect connections both on the spin bundle and on the frame bundle and how this reflects on the Dirac equations.
\end{abstract}

\maketitle

\section{Introduction}

Dirac equations provide an important tool to study the geometric structure of manifolds, as well as to model the behaviour of a class of physical particles, namely fermions, which includes electrons.\\
The aim of this paper is to generalise a key item needed to formulate Dirac equations, the spin structures, in order to extend the range of allowed transformations. Let us start by first reviewing the usual approach to Dirac equations.\\
Let $(M,g)$ be an orientable pseudo-Riemannian manifold with signature $\eta=(r,s)$, such that $r+s=m=dim(M)$. \\
Let us denote by $L(M)$ the (general) frame bundle of $M$, which is a $GL(m,\mathbb R)$-principal fibre bundle. Points of $L(M)$ are pairs $(x,\epsilon_a)$, where $\epsilon_a$ is a base of $T_xM$. Furthermore, if $M$ is orientable or, more generally, if we are able to single out a class of positive frames, one can define the subbundle $SO_e(M,g)\subset L(M)$ of orthonormal frames with respect to $g$. It is a principal $SO_e(r, s)$-bundle, where $SO_e(r, s)$ denotes the connected component to the identity of the orthogonal group $ O(r, s)$ in the relevant signature which, depending on the case, may have more than one connected component. 
One has a canonical embedding $\hat \imath:SO_e(M, g)\rightarrow L(M)$, which is a reduction of the frame bundle structure group \cite{KobaNu} satisfying
\begin{center}
\begin{tikzcd}
SO_e(M,g) \arrow{r}{\hat \imath} \arrow{d}[left]{\hat\pi} & L(M)\arrow{d}[left]{\pi}\\
M \arrow[r,equal] & M
\end{tikzcd}
\quad
\begin{tikzcd}
SO_e(M,g) \arrow[r,"\hat \imath"] \arrow[d,"R_g"] & L(M)\arrow[d,"R_{i(g)}"]\\
SO_e(M,g) \arrow[r,"\hat \imath"] & L(M)
\end{tikzcd}
\end{center} 
where $i:SO_e(r,s)\hookrightarrow GL(m,\mathbb R)$ is the canonical group embedding.\\
Once a certain signature is fixed, one has a canonical matrix $\eta_{ab}=\rm diag(-1, \dots, -1, 1, \dots,  1)$ with $r$ pluses and $s$ minuses, which in turn defines a bilinear form $\eta = \eta_{ab} E^a\otimes E^b$  in any vector space $W$ of dimension $m$ with a basis $E_a$, which is then $\eta$-orthonormal. One can then define the relevant Clifford algebra $Cl(r, s)$ and the corresponding $Spin(r, s)$ group in it. This allows to build the spin bundle, which we will denote with $P$, a principal fiber bundle $(P,M,p,Spin_e(r,s))$ which is a main ingredient in the definition of spin structures.
\begin{definition}
A spin structure over $(M,g)$ is a pair $(P,\Lambda)$, where P is a spin bundle and $\Lambda :P\to SO_e(M,g)$ is a vertical principal morphism with respect to the covering map $l: Spin_e(r,s)\to SO_e(r,s)$. 
\end{definition} 
This is equivalent to require the commutativity of the following diagrams
\begin{center}
\begin{tikzcd}
P \arrow{r}{\Lambda} \arrow{d}{Spin_e(r,s)}[left]{p} & SO_e(M,g)\arrow{d}{SO_e(r,s)}[left]{\hat{\pi}}\\
M \arrow[r,equal] & M
\end{tikzcd}
\quad
\begin{tikzcd}
P \arrow[r,"\Lambda"] \arrow[d,"R_g"] & SO_e(M,g)\arrow[d,"R_{l(g)}"]\\
P \arrow[r,"\Lambda"] & SO_e(M,g)
\end{tikzcd}
\end{center} 
\begin{remark}
A pseudo-Riemannian manifold $(M,g)$ admits a spin structure $(P,\Lambda)$ if and only if its second Stiefel-Whitney class is zero: this condition is independent of the metric and of the signature. A manifold satisfying it is called spin manifold. In the literature, spin structures are used to formulate Dirac equations on manifolds: in order to use these structures, one has then to restrict the choice of base manifolds to those satisfying the topological condition $w_2(M)=0$. Further details can be found in \cite{LawsonMichelsohn} and \cite{Antonsen}.
\end{remark}
Let us then select a spin structure $(P, \Lambda)$ and consider a representation $\hat\lambda: Cl(r, s)\times V\rightarrow V$ of the Clifford algebra on a (usually complex) vector space of dimension $k=dim(V)$, 
which restricts to a representation $ \lambda: Spin_e(r, s)\times V\rightarrow V$ of the (connected component to the identity of the) spin group. One can then define an associated vector bundle, the {\it spinor bundle}, whose sections will be accordingly called {\it spinor fields}.
\begin{definition}
Let P be a spin bundle and $\lambda$ a representation of the spin group on a vector space V. The spinor bundle is the vector bundle associated to the spin bundle $P$ 
\begin{align}
S(P)=P\times_\lambda V.
\end{align}
\end{definition}
Notice that, once a representation $\hat \lambda$ is given, one can also associate matrices to the Clifford generators ${\bf e}_a$, which will be denoted by $\gamma^a:= \eta^{ab} \hat \lambda({\bf e}_b)$ and are (generally complex) $k\times k$ matrices.
At last, given a spin structure, a principal connection $\hat H_p\in T_pP$ and a representation $\hat\lambda$ one can define the Dirac equation as 
\begin{align}\label{diraceq}
i e_a^\mu \gamma^a  \hat\nabla_\mu \psi +\mu\psi =0,
\end{align}
 where $(x,\partial_\mu)e^\mu_a:=\Lambda(\overset{\scriptscriptstyle(\alpha)}{\sigma})$, $\overset{\scriptscriptstyle(\alpha)}{\sigma}$ being a local section of $P$ and $\hat\nabla_\mu \psi$ denotes the covariant derivative of the spinor field with respect to the induced connection on the associated bundle $S(P)$.\footnote{As noted from the previous formula, we are following Einstein's summation convention for contracted indices, which means, for example, that repeated indices are summed over
\begin{align}\nonumber
(x,\partial_\mu)\overset{\scriptscriptstyle(\alpha)}{e} {}_a^\mu \longrightarrow \sum_{\mu=1}^m(x,\partial_\mu)\overset{\scriptscriptstyle(\alpha)}{e} {}_a^\mu.
\end{align}
The range of both Latin and Greek indices will be $\alpha, a=1,...,m$ if not otherwise specified. }\\
Equation \eqref{diraceq} is clearly defined locally, i.e.~with respect to a local trivialization of $P$, but one can easily prove that it is covariant with respect to automorphisms of $P$ (active viewpoint) or, equivalently, with respect to changes of local trivializations on $P$ (passive viewpoint). This is briefly proved in the Appendix. \\
In view of the fact that automorphisms of $P$ are symmetries for the Dirac equation, we call them {\it spin transformations}. Under these transformations, because of equivariance of the spin structures, the frame $e_a$, used to define Dirac equation, is changed to a different $g$-orthonormal frame or, equivalently, one could say that the metric is left unchanged under spin transformations.\\
Let us also remark that in this way, Dirac theory is cast as a gauge-natural field theory; see for example \cite{Book}, \cite{Kolar}. Roughly speaking, that means that one has a configuration bundle which is a gauge-natural bundle associated to $P$ and the dynamics is covariant with respect to all automorphisms of $P$. \\
The usual approach to Dirac theory arises, as we have seen, through the use of spin structures, which, by definition, require a metric to be defined in the first place. \\

\par There is also a different, more general and somehow more geometric framework for Dirac equations, based on the universal covering $\tilde L(M)$ of the frame bundle $L(M)$. It is interesting to briefly review it and also to compare to it; see \cite{muller}, \cite{trautman} and \cite{percacci}.\\
The universal covering of the (positive) frame bundle, denoted by $\tilde L(M)$, is a two-fold covering and it is a principal bundle for the group $\tilde{GL}(m)$, which is a double covering of $GL(m)$. \\
{
As we have said, fixing a metric $g$ (of signature $(r,s)$) on a manifold $M$ defines the sub-bundle $SO(M, g)$ of (positive) $g$-orthonormal frames. While $SO(M, g)\subset L(M)$ is a principal bundle for the group $SO(r, s)$, its preimage $\tilde \Sigma_g \subset \tilde L(M)$ is a principal bundle for the group $Spin(r, s)$
so that $(\tilde \Sigma_g, \tilde\pi\circ \tilde\imath|_{SO(M,g)})$ is a standard spin structure, where $\tilde\pi\circ \tilde\imath|_{SO(M,g)}: \tilde \Sigma_g\rightarrow SO(M, g)$ is the covering map obtained by the immersion $\tilde\imath: \tilde\Sigma_g\rightarrow \tilde L(M)$ and the covering projection $\tilde\pi: \tilde L(M)\rightarrow L(M)$.}

In this more general framework, one does not need to fix a metric {\it a priori}, but on the contrary it aims to catch all spin structures at once.
In particular, one can lift a diffeomorphism of $M$ to $L(M)$ by using its natural structure, and to $\tilde L(M)$ by using the covering properties.\\
As a result, in this framework one can systematically define isomorphisms of spin structures, possibly with different underlying metrics and define natural-like properties.\\
However, all spin structures end up in the same framework, even those with non-isomorphic $Spin(r,s)$-bundles. Depending on the problem one needs to study, this can be a good feature or a drawback. For example, in a variational setting, it is often better to define configurations with the same global properties, on one single structure bundle. In fact it is for a similar reason that, when discussing electromagnetism, the formalism based on principal $U(1)$-connections is better that a formulation in terms of forms, even though Maxwell equations can be formulated in terms of forms and the latter framework often contains solutions which, when defined as connections, are on different $U(1)$-bundles.\\
While the standard spin structure formulation is suitable for an {\it a priori} fixed metric structure, the double covering formulation is suitable to discuss spin structures on a general geometry, where one can change the metric and the global properties of spinors.\\
In this paper we will define a more general formalism than the standard spin structures, which allows different metrics at once, though it separates solutions which are defined on different structure bundles $P$. As it happens in electromagnetism, by doing that one gives up naturality (i.e.~covariance with respect to diffeomorpfisms on the manifold $M$), though replacing it with covariance with respect to automorphisms of the structure bundle $P$, as in gauge theories. This kind of covariance is called {\it gauge-naturality} and let us trivially remark that the group of general automorphims of $P$ projects on the group of diffeomorphisms of $M$, while an embedding of diffeomorphisms in automorphisms in not naturally defined.\\
Our formulation sits more or less halfway between the standard spin structures and double coverings, maintaining some of the properties (e.g.~allowing different metrics), but not all (e.g.~we work with a fixed structure bundle $P$). It naturally takes into account that one can change reference frames without changing the point on the manifold (which corresponds to a vertical automorphisms) and Dirac equations are clearly covariant with respect to these transformations as well. These act as gauge transformations in electromagnetism. \\
This formulation, the spin frame approach, will be therefore introduced in the next Section, where we will see how one can define a metric on a bare manifold. We want to avoid fixing a metric structure {\it a priori} on $M$ because we have in mind possible applications to physics and in particular to general relativistic theories for which a background independent formulation is preferred. An approach for which a metric $g$ is determined \textit{a posteriori} is indeed needed, when discussing Dirac equation in interaction with gravity: in that case, spin frames (and thus the induced metric) are unknown until the system of field equations (Einstein and Dirac, which are coupled PDE) is solved.\\
In Section \ref{sec3} we will define {\it spin frame transformations}, which extend the group of allowed transformations on spin frames. These contain transformations which, unlike spin transformations, do change the metric.\\
In Section \ref{sec4} we will investigate how spin frame transformations act on connections on $P$ and how they affect the correspondence between them and connections of $L(M)$.\\
Such correspondence can be used to transform Dirac equations, which seems to have applications in describing graphene: quasiparticles can be described as fermions satisfying Dirac equations in non-flat geometries and non-trivial spin structures.
We shall briefly discuss these applications in the last Section.

\tableofcontents

\section{Spin frames}\label{sec2}
As argued in the previous Section, we aim here to rephrase spin structures so that the construction starts from a spin manifold $M$ and a signature $(r, s)$, but with no fixed metric on it.\\
Notice that we are not claiming that spin frames are essentially different from spin structures. On the contrary, they are essentially the same thing: in some contexts, the difference is not essential. In many situations a Riemannian structure is simply there and spin structures are enough to define global Dirac equation to be studied.\\
In other situations \cite{Spinors}, \cite{Jadwisin}, \cite{DoBosons} one needs Dirac equation to be coupled with Einstein equations to form a coupled system of PDE and the metric structure is unknown \textit{a priori}, because it has to be obtained as a solution. In these cases, the difference between spin structures and spin frames is relevant. 
\begin{definition}\label{spinframe}
A spin frame over the spin manifold M is a pair $(P,e)$, where P is a spin bundle and $e: P\to L(M)$ is a vertical principal morphism with respect to the map $i\circ l:Spin_e(r,s)\rightarrow GL(m,\mathbb R)$.
\end{definition}
In other words, we must require the commutativity of the following diagrams
\begin{center}
\begin{tikzcd}
P \arrow{r}{e} \arrow{d}{Spin_e(r,s)}[left]{p} & L(M)\arrow{d}{GL(m,\mathbb{R})}[left]{\pi}\\
M \arrow[r,equal] & M
\end{tikzcd}
\quad
\begin{tikzcd}
P \arrow[r,"e"] \arrow[d,"R_g"] & L(M)\arrow[d,"R_{i\circ l(g)}"]\\
P \arrow[r,"e"] & L(M)
\end{tikzcd}
\end{center} 
As one can see from the definition, no metric structure is required on the base manifold $M$. Let us now fix the notation and prove some properties of spin frames. Let $U_\alpha$ be an open cover of $M$ and let $\overset{\scriptscriptstyle(\alpha)}{\sigma} :U_\alpha\subset M\to P$ be a family of local sections of $P$, defined on each open of $M$. Being $P$ principal, on the overlap of two patches $U_\alpha\cap U_\beta$ one must have $$\overset{\scriptscriptstyle(\beta)}{\sigma}=\overset{\scriptscriptstyle(\alpha)}{\sigma}\cdot\varphi_{(\alpha\beta)},$$ where $\varphi_{(\alpha\beta)}$ are the transition functions on $P$ valued on $Spin_e(r,s)$.\\
The image of a spin frame is completely determined once evaluated on such a family of local sections, as it can be extended by equivariance. Furthermore, a spin frame always induces local sections on the frame bundle
\begin{align}\label{inducedsection}
e(\overset{\scriptscriptstyle(\alpha)}{\sigma})=:(x,\overset{\scriptscriptstyle(\alpha)}{e}_a),
\end{align}
where $a=1,...,m$. These induced sections can obviously be expressed in terms of the natural section $(x,\partial_\mu)$, induced by the local coordinates $x^\mu$ on $U_\alpha\subset M$, where $\mu=1,...,m$ as 
\begin{align}\nonumber
(x,\overset{\scriptscriptstyle(\alpha)}{e}_a)=(x,\partial_\mu)\overset{\scriptscriptstyle(\alpha)}{e} {}_a^\mu.
\end{align}
Consider now the left group action 
\begin{align}
\rho :&\ (Spin(\eta)\times GL(m,\mathbb R))\times GL(m,\mathbb R)\to GL(m,\mathbb R)\\ \nonumber
:&\ (S,J,e)\mapsto e'= J\cdot e\cdot \hat l(S)=:\rho(S,J,e),
\end{align}
where $\hat l=i\circ l: Spin_e(r,s)\to GL(m,\mathbb R)$. This allows to look at spin frames as sections of an associated vector bundle.
\begin{definition}
The bundle of spin frames is the associated bundle to the principal bundle $(P\times_M L(M),M,p,Spin_e(r,s)\times GL(m,\mathbb R))$ through the group action $\rho$. We will denote it as
\begin{align} \nonumber
F(P)=(P\times_M L(M))\times_\rho GL(m,\mathbb R)
\end{align}
\end{definition}
\begin{remark}
Before continuing with the following proposition, let us stress out that a certain mathematical object is global if it is expressed without any reference to local coordinates or trivializations or if its local expressions satisfy certain glueing properties in the overlap of different patches or trivializations. As an example, take a global vector field  $X$ defined on a manifold $N$: then it is global if in any intersection of local patches $V_\alpha\cap V_\beta$ the components of the vector field transform as $$X'^\mu=J^\mu_\nu X^\nu,$$ where $J^\mu_\nu$ is the Jacobian of the coordinates transformation between the two local patches. \\
In this paper we will try and keep local expressions confined in remarks. Nevertheless, such local expressions can be useful, despite hiding the global nature of objects, which is always ensured by the transformation laws obeyed by the objects.
\end{remark}
\begin{proposition}\label{propF(P)}
There is a one-to-one correspondence between spin frames $(P,e)$ and sections of $F(P)$.
\end{proposition}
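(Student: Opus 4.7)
The plan is to invoke the standard correspondence between sections of an associated bundle and equivariant maps from the underlying principal bundle into the typical fibre, applied to the principal $(Spin_e(r,s)\times GL(m,\mathbb{R}))$-bundle $P\times_M L(M)$ with typical fibre $GL(m,\mathbb{R})$ acted on by $\rho$. Concretely, I would exhibit maps in both directions and then verify that they are mutually inverse.

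For the forward direction, given a spin frame $(P,e)$ I associate a section $s_e$ as follows. For any $x\in M$ and any $(p,q)\in(P\times_M L(M))_x$, both $e(p)$ and $q$ lie in the $GL(m,\mathbb{R})$-torsor $L(M)_x$, so there is a unique $g(p,q)\in GL(m,\mathbb{R})$ with $e(p)=q\cdot g(p,q)$. The equivariance of $e$ with respect to $\hat l$, combined with the freeness and transitivity of the $GL(m,\mathbb{R})$-action on the fibres of $L(M)$, yields
$$g(p\cdot S,\, q\cdot J)=J^{-1}\cdot g(p,q)\cdot\hat l(S),$$
which is exactly the compatibility condition ensuring that the triple $(p,q,g(p,q))$ descends to a well-defined class in $F(P)$. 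I then set $s_e(x):=[(p,q),g(p,q)]_\rho$, which is independent of the chosen representative $(p,q)$.

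For the backward direction, given a section $s\colon M\to F(P)$ I first pass to the associated equivariant map $\tilde e\colon P\times_M L(M)\to GL(m,\mathbb{R})$, and then define $e_s\colon P\to L(M)$ by $e_s(p):=q\cdot\tilde e(p,q)$ for any $q\in L(M)$ in the fibre over the base point of $p$. Independence from the choice of $q$ follows from the $GL(m,\mathbb{R})$-part of the equivariance of $\tilde e$, the required $Spin_e(r,s)$-equivariance of $e_s$ covering $\hat l$ is read off from the $Spin$-part, and verticality is automatic because the whole construction is fibrewise. The two assignments are manifestly inverse: $e_{s_e}(p)=q\cdot g(p,q)=e(p)$ on the nose, and applying $e\mapsto s_e$ to $e_s$ returns $s$ after unwinding the definitions.

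The main obstacle is simply bookkeeping of the conventions, since $\rho$ combines a left multiplication by $GL(m,\mathbb{R})$ with a right multiplication by $Spin_e(r,s)$ through $\hat l$; one has to be careful with inverses when matching the equivariance of $e$, the equivariance of $\tilde e$, and the equivalence relation defining $F(P)$. In local terms this reduces to the elementary check that the matrix-valued components $\overset{\scriptscriptstyle(\alpha)}{e}{}_a^\mu(x)$ produced by \eqref{inducedsection} transform between different trivialisations of $P$ and coordinate patches on $M$ exactly via $\rho$; once this local check is carried out, nothing deeper remains.
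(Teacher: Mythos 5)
Your argument is correct, and it establishes the statement via the global, intrinsic version of the associated-bundle correspondence, whereas the paper works entirely in local trivializations: it derives the gluing law \eqref{changetrivialization} for the components $\overset{\scriptscriptstyle(\alpha)}{e}{}^\mu_a$ from equivariance of $e$ and then observes that this law has exactly the form of the $\rho$-equivalence defining $F(P)$. Your ``difference map'' $g(p,q)$, defined by $e(p)=q\cdot g(p,q)$ through the torsor structure of $L(M)_x$, is precisely the global object whose local representatives are those components, and the equivariance $g(p\cdot S,\,q\cdot J)=J^{-1}\cdot g(p,q)\cdot\hat l(S)$ you compute is the intrinsic form of \eqref{changetrivialization}. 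What your route buys is that well-definedness and the two-sided inverse property are checked once, globally, without choosing an atlas; what the paper's route buys is the explicit transformation law \eqref{changetrivialization}, which is reused elsewhere (e.g.\ in the proof of Proposition \ref{spintransformationsandmetricstructure}). One caveat, which you already flagged as a bookkeeping issue: as written, $\rho$ acts from the \emph{right} through $\hat l$ in the $Spin$ slot, so it is not literally a left action of the product group, and the descent condition for your triple matches your computed equivariance only if the equivalence on $(P\times_M L(M))\times GL(m,\mathbb R)$ is taken in the form $((p,q),f)\sim\bigl((p\cdot S,q\cdot J),\,J^{-1}\,f\,\hat l(S)\bigr)$ --- the convention forced by \eqref{changetrivialization} --- rather than the one involving $\hat l(S^{-1})$. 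With that convention fixed (an ambiguity present in the paper itself, not a gap in your argument), everything matches.
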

\begin{proof}
If a given spin frame $(P,e)$ is a global morphism then its local expressions must appropriately glue on the overlap of different patches of $M$. The glueing condition is obtained by considering equivariance of spin frames
\begin{align} \nonumber
e(\overset{\scriptscriptstyle(\beta)}{\sigma})=(x,\overset{\scriptscriptstyle(\beta)}{e}_a)=(x,\overset{\scriptscriptstyle(\alpha)}{e}_b\hat l\ ^b_a(\varphi_{(\alpha\beta)}))\implies (x,\partial_\mu ')\overset{\scriptscriptstyle(\beta)}{e}{}^\mu_a=(x,\partial '_\mu) J^\mu_\nu \overset{\scriptscriptstyle(\alpha)}{e}{}^\nu_b\hat l\ ^b_a(\varphi_{(\alpha\beta)}),
\end{align}
which yields
\begin{align}\label{changetrivialization}
\overset{\scriptscriptstyle(\beta)}{e} {}^\mu_a=J^\mu_\nu \overset{\scriptscriptstyle(\alpha)}{e}{}^\nu_b\hat l\ ^b_a(\varphi_{(\alpha\beta)}).
\end{align}
On the other hand, a section of $F(P)$ is an equivalence class 
\begin{align}
[((\varphi,J),e)]_\rho=[((Id_{Spin(\eta)},Id_{GL(m,\mathbb R)}),e'=\rho(\varphi,J,e))]_\rho.
\end{align}
Since the glueing condition and the representative of the equivalence class are in the same form, we see that, given a global spin frame, a section of $F(P)$ is defined and vice versa we see that, given an equivalence class in $F(P)$, which is described by all $e'=J\cdot e\cdot \hat l(S)$ for any $(\varphi,J)\in Spin_e(r,s)\times GL(m,\mathbb R)$, a global spin frame is defined.
\end{proof}
Let us now see how spin frames are able to define a metric structure: this will be uniquely defined, whereas the opposite is not possible. Indeed, given a metric, one can only find an orthonormal frame, up to orthogonal transformations.
\begin{definition}
Let $(P,e)$ be a spin frame and $p\in P$ a point in the spin bundle mapped into $e(p)=(x,\epsilon_a)\in e(P)\hookrightarrow L(M)$. We define the induced metric as
\begin{align}\label{metricdefinition}
g(\epsilon_a,\epsilon_b):=\eta_{ab}.
\end{align}
\end{definition}
\begin{remark}
The intrinsic definition \eqref{metricdefinition} can be locally restated as
\begin{align}\label{localg}
g=g_{\mu\nu}dx^\mu\otimes dx^\nu, \qquad g_{\mu\nu}:=\epsilon^a_\mu\eta_{ab}\epsilon^b_\nu,
\end{align}
where $x^\mu$ are the coordinates in a patch $U_\alpha\subset M$.
\end{remark}
We refer to the Appendix for a proof that spin transformations send $g$-orthonormal frames into $g$-orthonormal frames, a property which of course also holds for spin structures.\\
We finally focus on existence conditions of spin frames by proving the following theorem.
\begin{theorem}
A spin frame $(P,e)$ over the manifold $M$ exists if and only if there exists a spin structure over $(M,g)$ for some metric $g$ on $M$.
\end{theorem}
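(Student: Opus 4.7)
The plan is to prove both implications as essentially corresponding constructions, with the forward direction (spin frame $\Rightarrow$ spin structure) being the one requiring the bulk of the verification.

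For the easy direction, suppose a spin structure $(P,\Lambda)$ over $(M,g)$ exists. Then I would simply set $e := \hat\imath \circ \Lambda : P \to L(M)$, where $\hat\imath : SO_e(M,g)\hookrightarrow L(M)$ is the canonical embedding recalled in the introduction. Verticality over $M$ is immediate from the two commutative squares defining $\Lambda$ and $\hat\imath$, and the equivariance of $e$ with respect to $i\circ l : Spin_e(r,s)\to GL(m,\mathbb R)$ follows by composing the equivariance of $\Lambda$ (w.r.t.\ $l$) with the equivariance of $\hat\imath$ (w.r.t.\ $i$). So $(P,e)$ is a spin frame in the sense of Definition \ref{spinframe}.

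For the converse, given a spin frame $(P,e)$ I would first use the construction already introduced in the paper to define an induced metric $g$ on $M$ via $g(\epsilon_a,\epsilon_b):=\eta_{ab}$ on any $(x,\epsilon_a)\in e(P)$. The first thing to check is well-posedness: if $p,p'\in P$ lie over the same $x\in M$, then $p' = p\cdot S$ for some $S\in Spin_e(r,s)$, so equivariance gives $e(p') = e(p)\cdot \hat l(S)$; since $\hat l(S)\in SO_e(r,s)$ preserves $\eta$, the metric $g_x$ is independent of the representative chosen in the fibre. By construction every frame in $e(P)$ is $g$-orthonormal, and since $Spin_e(r,s)$ is connected and $\hat l$ is continuous, $e(P)$ sits inside a single connected component of the orthonormal frame bundle, which we can take to be $SO_e(M,g)$ (possibly after a global choice of orientation class). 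This lets me factor $e = \hat\imath\circ \Lambda$ through $SO_e(M,g)$, thereby defining a candidate $\Lambda: P\to SO_e(M,g)$.

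It then remains to check that $\Lambda$ is a vertical principal morphism with respect to $l:Spin_e(r,s)\to SO_e(r,s)$; verticality is inherited from $e$, while equivariance under $l$ follows because $\hat\imath$ is injective and equivariant under $i$, so the equivariance of $e$ under $i\circ l$ passes down to equivariance of $\Lambda$ under $l$. This shows $(P,\Lambda)$ is a spin structure over $(M,g)$, completing the equivalence.

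I expect the main delicate point to be not the algebra of the principal morphisms, which is essentially bookkeeping, but rather the orientation/connected-component issue: verifying that $e(P)$ genuinely lands in $SO_e(M,g)$ rather than in another component of the $g$-orthonormal frame bundle, and that this is compatible with the topological condition (vanishing second Stiefel--Whitney class) which the existence of $P$ already forces. Once that is handled, the two constructions are manifestly inverse to each other on the level of principal morphisms, so the biconditional follows.
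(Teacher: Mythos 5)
Your proposal is correct and follows essentially the same route as the paper: the converse direction composes $\Lambda$ with the canonical embedding $\hat\imath$ and checks equivariance under $i\circ l$, while the forward direction identifies the image $e(P)$ with the orthonormal frame bundle $SO_e(M,g)$ of the induced metric and restricts the codomain of $e$ to obtain $\Lambda$. The only difference is that you spell out the well-posedness of the induced metric and the connected-component issue, which the paper leaves implicit in its prior definition of the induced metric; this is a welcome addition rather than a divergence.
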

\begin{proof}
Given a spin frame $(P,e)$ over $M$, one can consider the image $e(P)=\{(x,e_a): \ e_a \ \textit{\rm is a basis of}\  T_xM\ \wedge \ g(e_a,e_b)=\eta_{ab} \}$, where we crucially remark that the metric appearing here is the one induced by the spin frame. One can then define a spin structure as
\begin{center}
\begin{tikzcd}
P \arrow{r}{\hat e} \arrow{d}[left]{p} & e(P)\arrow{d}[left]{\pi|_{e(P)}}\\
M \arrow[r,equal] & M
\end{tikzcd}
\quad
\begin{tikzcd}
P \arrow[r,"\hat e"] \arrow[d,"R_g"] & e(P)\arrow[d,"R_{i\circ l(g)}|_{SO_e(r,s)}=R_{l(g)}"]\\
P \arrow[r,"\hat e"] & e(P)
\end{tikzcd}
\end{center} 
On the contrary, given a spin structure $(P,\Lambda)$ over $(M,g)$ one can define a spin frame $(P,e=\hat \imath\circ\Lambda)$: this is indeed a spin frame since
\begin{align}\nonumber
e\circ R_g=\hat \imath\circ\Lambda \circ R_g=\hat \imath\circ R_{l(g)}\circ\Lambda=R_{i\circ l(g)}\circ\hat \imath\circ\Lambda=R_{i\circ l(g)}\circ e.
\end{align}
\end{proof}
This is the same as saying that the following diagrams commute
\begin{center}
\begin{tikzcd}
P \arrow[r,"\Lambda"] \arrow{d}[left]{p} & SO_e(M,g)\arrow{d}[left]{\hat\pi} \arrow[r,"\hat\imath"] &L(M)\arrow{d}[left]{\pi} \\
M \arrow[r,equal] & M\arrow[r,equal] &M
\end{tikzcd}
\quad
\begin{tikzcd}
P \arrow[r,"\Lambda"] \arrow[d,"R_g"] & SO_e(M,g)\arrow[d,"R_{l(g)}"]\arrow[r,hook,"\hat \imath"]& L(M) \arrow[d,"R_{i\circ l(g)}"]\\
P \arrow[r,"\Lambda"] & SO_e(M,g)\arrow[r,hook,"\hat \imath"]& L(M) 
\end{tikzcd}
\end{center} 
Since existence conditions between spin structures and spin frames are equivalent, one might still ask why bother considering the latter. We now reinforce the motivations we have given before by considering a one parameter family of spin structures $(P,\Lambda_t)$, where $t\in\mathbb R$. These are maps from the spin bundle to a given orthonormal frame bundle, each one differing by an orthogonal transformation. \\
A one parameter family of spin frames is instead a couple $(P,e_t)$: these are maps from the spin bundle into the frame bundle. By restricting these maps to their images, which are different orthonormal frame bundles $e_t(P)$, one obtains a family of spin structures, as shown in the previous theorem. We remark that the obtained spin structures differ one another for the arising metric $g_t$, which was instead the same in the case of a one parameter family of spin structures. In the following Section we will introduce transformations connecting two different orthonormal frame bundles. 


\section{Spin frame transformations and spin bundle connections}\label{sec3}
We now define spin frame transformations and prove that, contrary to spin transformations, they are able to induce different metric structures on $M$.
\begin{definition}
Let $(P,e)$ be a spin frame and $\Phi:L(M)\to L(M)$ a vertical principal automorphism of the frame bundle. We define $(P,\tilde e)$ as the transformed spin frame that makes the following diagram commute.
\begin{center}
\begin{tikzcd}
                                                      &  &                            & L(M) \arrow[dd,"\pi"] \\
P \arrow[rr, "e"] \arrow[rrru, "\tilde e"] \arrow[dd,"p"] &  & L(M) \arrow{ru}[near start]{\Phi} \arrow[crossing over]{dd}{\pi} &                 \\
                                                      &  &                            & M               \\
M \arrow[rr,equal]                                        &  & M \arrow[ru,equal]               &                
\end{tikzcd}
\end{center} 
We will refer to the automorphism $\Phi$ as a spin frame transformation.
\end{definition}  
\begin{remark}
Being a vertical automorphism, we know that its action is completely determined if we know it on local sections of the frame bundle. \\
Take for example the one induced by the spin frame $(P,e)$, $(x,\overset{\scriptscriptstyle(\alpha)}{e}{}_a)$: then
\begin{align}\nonumber
(x,\overset{\scriptscriptstyle(\alpha)}{\tilde e}{}_a)=\tilde e(\overset{\scriptscriptstyle(\alpha)}{\sigma})&=\Phi(e(\overset{\scriptscriptstyle(\alpha)}{\sigma}))=\Phi(x,\overset{\scriptscriptstyle(\alpha)}{e}{}_a)=(x,\overset{\scriptscriptstyle(\alpha)}{e}{}_b)\phi^b_a(x),
\end{align}
where $\phi^b_a$ is a $GL(m,\mathbb R)-$matrix. A generic point $e(p)=(x,\epsilon_a)=(x,\overset{\scriptscriptstyle(\alpha)}{e}{}_b)\epsilon^b_a$ is then mapped to
\begin{align}\nonumber 
(x,\overset{\scriptscriptstyle(\alpha)}{e}{}_c)\tilde\epsilon^c_a=\tilde e(p)=\Phi(e(p))=\Phi((x,\overset{\scriptscriptstyle(\alpha)}{e}{}_b)\epsilon^b_a)=\Phi(x,\overset{\scriptscriptstyle(\alpha)}{e}{}_b)\epsilon^b_a=(x,\overset{\scriptscriptstyle(\alpha)}{e}{}_c)\phi^c_b(x)\epsilon^b_a,
\end{align}
which is locally expressed by the left action 
\begin{align}
\Phi: \begin{cases}
      x'=x\\
       \tilde\epsilon^c_a=\phi^c_b(x)\epsilon^b_a.
    \end{cases}       
\end{align}
The same procedure can be repeated for the natural section $(x,\partial_\mu)$, which yields 
\begin{align}
\Phi: \begin{cases}
      x'=x\\
      \tilde\epsilon^\mu_a=\phi^\mu_\nu(x) \epsilon^\nu_a,
    \end{cases}       
\end{align}
where again $\phi^\mu_\nu$ is a $GL(m,\mathbb R)-$matrix.
\end{remark}
We now prove that spin frames induce different metric structures. Metrics on a manifold can be thought of as points of the associated bundle $Lor(M)=L(M)\times_\rho L$, where $L\subset T^2_0(\mathbb R^m)$ is the set of symmetric, non-degenerate tensors with signature $(r,s)$ and 
\begin{align}\nonumber
\rho: \ &GL(m,\mathbb R)\times L\to L\\ \nonumber
&(A^a_b,s)\mapsto \rho(A^a_b,s):=s(\bar A^a_c v_a,\bar A^b_d v_b)w^c\otimes w^d,
\end{align}
where $v_a$ and $w^a$ are a basis of vectors and linear forms on $\mathbb R^m$, respectively. Without loss of generality, we can choose the basis of the linear forms to be $dx^\mu$, induced by the coordinates on $\mathbb R^m\simeq T_x^*M$. Furthermore, once a symmetric, non-degenerate tensor is given, one can freely choose a global basis of vectors $t_a\in\mathbb R^m\simeq T_xM$ such that $s(t_a,t_b)=\eta_{ab}$.\\
From the theory of associated bundles we know that principal morphisms induce morphisms on the associated bundles. From this fact we now prove the following proposition.
\begin{proposition}
A generic spin frame transformation acting on a spin frame $(P,e)$ induces a different metric structure on the base manifold.
\end{proposition}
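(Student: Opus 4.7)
The plan is to compute explicitly how the induced metric changes under a spin frame transformation $\Phi$, and then identify the precise subset of transformations that preserve the metric, showing that this subset is a proper (in fact, strictly smaller) subgroup of the full group of vertical automorphisms of $L(M)$.

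First, I would unwind the definitions. Given the spin frame $(P,e)$, at a point $p\in P$ write $e(p)=(x,\epsilon_a)$ with $\epsilon_a=\partial_\mu\,\epsilon^\mu_a$; the induced metric is $g_{\mu\nu}=\epsilon^a_\mu\,\eta_{ab}\,\epsilon^b_\nu$, where $\epsilon^a_\mu$ denotes the inverse matrix of $\epsilon^\mu_a$. The transformed spin frame is $\tilde e=\Phi\circ e$, and by the preceding remark its local expression on the natural section reads $\tilde\epsilon^\mu_a=\phi^\mu_\nu(x)\,\epsilon^\nu_a$ for some $GL(m,\mathbb R)$-valued function $\phi$ representing $\Phi$.

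Next I would compute the new metric $\tilde g$ induced by $\tilde e$. Inverting $\tilde\epsilon^\mu_a=\phi^\mu_\nu\,\epsilon^\nu_a$ yields $\tilde\epsilon^a_\mu=\epsilon^a_\nu\,(\phi^{-1})^\nu_\mu$, so
\begin{align}\nonumber
\tilde g_{\mu\nu}=\tilde\epsilon^a_\mu\,\eta_{ab}\,\tilde\epsilon^b_\nu=(\phi^{-1})^\rho_\mu\,g_{\rho\sigma}\,(\phi^{-1})^\sigma_\nu .
\end{align}
Equivalently, in matrix notation, $\tilde g=\phi^{-T} g\,\phi^{-1}$. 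Therefore $\tilde g=g$ if and only if $\phi(x)^T g(x)\,\phi(x)=g(x)$ at every point, i.e.\ if and only if $\phi(x)$ lies pointwise in the orthogonal group $O(g_x)$ of the metric $g$ already induced by $e$.

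Finally, I would conclude by observing that the set of $\Phi$ satisfying this pointwise orthogonality constraint is a proper subgroup of $\mathrm{Aut}_V(L(M))$: the structure group is the full $GL(m,\mathbb R)$, whereas the metric-preserving automorphisms are $O(g)$-valued, which has strictly smaller dimension $\tfrac12 m(m-1)<m^2$. Hence a generic spin frame transformation $\Phi$ does not satisfy $\phi^T g\,\phi=g$, and the induced metric $\tilde g$ is genuinely different from $g$. A one-line check that this analysis is globally consistent follows from the transformation rule \eqref{changetrivialization}: since $\Phi$ is vertical principal, the local matrices $\phi^\mu_\nu$ glue into a global section of $L(M)\times_{\mathrm{ad}}GL(m,\mathbb R)$, so the pointwise formula for $\tilde g_{\mu\nu}$ above defines a well-defined global tensor on $M$.

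The only real subtlety, which I would make precise, is the word ``generic'': I would interpret it as meaning that the stabilizer of $g$ under the action $\Phi\mapsto \phi^{-T}g\phi^{-1}$ is a proper subgroup, so that the complement is open and dense in $\mathrm{Aut}_V(L(M))$ (in any reasonable topology on gauge transformations). This is the only place where one must be careful — all the rest is a direct matrix computation guided by the definitions.
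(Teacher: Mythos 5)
Your proof is correct and follows essentially the same route as the paper: both compute the transformed frame components $\tilde\epsilon^\mu_a=\phi^\mu_\nu\epsilon^\nu_a$ and deduce $\tilde g_{\mu\nu}=\bar\phi^\rho_\mu\,g_{\rho\sigma}\,\bar\phi^\sigma_\nu$, the paper phrasing this through the induced morphism $\hat\Phi$ on the associated bundle $Lor(M)$ and its trivializations rather than by direct matrix inversion. Your identification of the stabilizer as the pointwise orthogonal group $O(g)$, a proper subgroup of the $GL(m,\mathbb R)$-valued gauge transformations, is actually a sharper justification of the word ``generic'' than the paper's remark that the expressions differ ``being $\Phi$ a principal morphism,'' which taken literally would wrongly exclude the metric-preserving automorphisms.
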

\begin{proof}
Let $(x,\epsilon_a)\in L(M)$ and $u=[(x,\epsilon_a),s]_\rho$ be a point of $Lor(M)$. We define the morphism $\hat\Phi$ as
\begin{align}\nonumber
 \hat\Phi: \ Lor(M)&\to Lor(M)\\ \nonumber
u&\mapsto \hat\Phi(u):=[\Phi(x,\epsilon_a),s]_\rho
\end{align}
To show our thesis, we take a trivialization $\hat t_{\partial_\mu}$ induced by the trivialization $t_{\partial_\mu}$ of $L(M)$. Trivialization on the associated bundles are defined as follows
\begin{align}\nonumber
\hat t_{\partial_\mu}(u)&:=(x,\rho(\epsilon^\mu_a,s))=(x,(\epsilon^a_\mu\eta_{ab}\epsilon^b_\nu) dx^\mu\otimes dx^\nu),\\ \nonumber
\hat t_{\partial_\mu}(\hat\Phi(u))&:=(x,\rho(\tilde \epsilon^\mu_a,s))=(x,(\tilde \epsilon^a_\mu\eta_{ab}\tilde \epsilon^b_\nu) dx^\mu\otimes dx^\nu).
\end{align}
If now $(x,\epsilon_a)\in e(P)$ then
\begin{align}\nonumber
\hat t_{\partial_\mu}(u)&=(x,g),\\ \nonumber
\hat t_{\partial_\mu}(\hat\Phi(u))&=(x,\tilde g)=(x,(\bar\phi^\rho_\mu(x) g_{\rho\sigma}\bar\phi^\sigma_\nu(x))dx^\mu\otimes dx^\nu),
\end{align}
where $g$ is the induced metric by the spin frame $(P,e)$. One clearly sees that the two local expressions of the metric are different, being $\Phi$ a principal morphism on $L(M)$. One can prove that this holds for any other chosen trivialization on the frame bundle (take for example the one induced by the spin frame), which implies that the two metrics are indeed different.
\end{proof}
Now that we have established the main properties of spin frame transformations on spin frames and metric structures, consider connections, which are an essential ingredient to build the Dirac equation, as we saw. In many situations the connection can be induced by the metric (or the spin frame), although sometimes it is useful to consider it independent of the metric (Palatini formulation of gravity (see \cite{Palatini} and \cite{Palatini2}), Palatini $f(\mathcal R)$-theories (see \cite{Extended2} and \cite{EG}), Ehlers-Pirani-Schild framework (\cite{EPS} and \cite{DiMauro}), teleparallel gravity \cite{Held}). Furthermore, if we are going to consider spinor fields in interaction with the gravitational field (which is identified with a geometry on $M$) we need to provide some result relating $P$ and $M$. Luckily, we already have a functor to map a spin frame $(P, e)$ to its associated metric structure $(M, g)$.\\
We shall discuss in this Section the case in which the connection is independent of the metric: take indeed a principal connection $\hat H$ on $P$ and use the spin frame $(P,e)$ to define horizontal subspaces on $L(M)$ as $H_{e(p)}:=T_pe(\hat H_p)$.
\begin{remark}
For this to make sense, we have to check that the subspace is well-defined, since we have two (different) points $p,p'\in P$, which are mapped onto the same frame $(x, \epsilon_a)= e(p)= e(p')$. They are related by $p'= p\cdot (-\mathbb I)$, since $ker(l)=ker(\hat l)=\{\pm \mathbb I\}$. \\
We then have $T_pR_{-\mathbb I}(\hat H_p) = \hat H_{p'}$ and 
$$H_{e(p')}=T_{p'}e(\hat H_{p'})=T_{p'}e \circ T_pR_{-\mathbb I} (\hat H_{p})=T_{p} ( e \circ R_{-\mathbb I} ) (\hat H_{p})=T_{p}  e (\hat H_{p}) = H_{e(p)},$$
where we used the map identity $ e \circ R_{-\mathbb I}=R_{\hat l(-\mathbb I)}  \circ e = e : P\rightarrow L(M)$.
\end{remark}
One can easily check that the family of the subspaces  $H_{e(p)}\subset T_{e(p)} L(M)$  can be extended by equivariance to $L(M)$ and that it is, by construction, right invariant. It is known that giving a principal connection on $P$ is equivalent to giving a map $\hat\omega:TM\rightarrow \hat H\subset TP$ which lifts a vector tangent to $M$ to a vector in the given horizontal subspace of $P$. Then 
\begin{align}
\hat H_p=\left\{\hat \omega(v)=v^\mu(\partial_\mu|_p-\omega^{ab}_\mu(x)\sigma_{ab}|_p), \forall v\in T_xM, \pi(p)=x\right\},
\end{align}
where $\sigma_{ab}$ are a set of vertical right invariant vector fields on $P$, thus the index $a=1,...,m$ runs along the Lie algebra $spin(r,s)$ and $\sigma_{(ab)}=0.$\\
We then see that to give a principal connection on $P$ is completely equivalent to give the coefficients $\omega^{ab}_\mu(x)$.\\
We know that a spin frame transformation can define a new spin frame $\tilde e= \Phi \circ e$: this in turn defines another horizontal subspace $T_{p}\tilde e(\hat H_p) $. Clearly 
\begin{align}
\tilde H_{\tilde e(p)} = T_{p}\tilde e(\hat H_p) = T_{p}( \Phi\circ e )(\hat H_p) = T_{e(p)} \Phi\circ T_p e (\hat H_p). 
\end{align}
Since connections are in direct correspondence to horizontal lifts, it comes with no surprise that inducing connections on $L(M)$ implies inducing horizontal lifts on $L(M)$: indeed one can define $\omega:=Te(\hat\omega):TM\rightarrow TL(M)$ and $\tilde\omega:=T\tilde e(\hat\omega):TM\rightarrow TL(M)$. Suppose then that $\omega^{ab}_\mu$ are given, which is equivalent to giving a connection on $P$: it is easy to see that $\tilde H$ is gauge equivalent to $H$ by comparing the coefficients of the vertical part of the two horizontal lifts when written in the same trivialization. Indeed the first horizontal lift is given by
\begin{align}\label{omega}
T_pe(\hat\omega(v)|_{p})=v^\mu(\partial_\mu|_{e(p)}-\omega^{ab}_\mu(x)\rho_{ab}|_{e(p)}),
\end{align} 
where $e(p)=(x,\epsilon_a)=(x,\overset{\scriptscriptstyle(\alpha)}{e}{}_b)\epsilon^b_a$ and the vertical vector field is $\rho_{ab}=\epsilon^c_d\partial{}_{[a}^d\eta{}^{}_{b]c}\in T_{e(p)}e(P)$. The second connection on $L(M)$ is instead given by 
\begin{align}\nonumber
T_p\tilde e(\hat\omega(v)|_{p})=T_{e(p)}\Phi(T_pe(\omega(v)|_{p}))=v^\mu(\partial_\mu|_{\tilde e(p)}-\tilde\omega^{ab}_\mu(x)\tilde \rho_{ab}|_{\tilde e(p)}),
\end{align} 
where $\tilde \rho_{ab}=\tilde\epsilon^c_d\tilde\partial{}_{[a}^d\eta{}^{}_{b]c}$ and
\begin{align}
\tilde\omega^{ab}_\mu=\phi^a_c(\omega^{cd}_\mu\eta_{de}\bar\phi^e_f+\partial_\mu\bar\phi^c_f)\eta^{fb}.
\end{align}
By right translating these connections into the same point (say $e(p)$) and by comparing the coefficients of the vertical parts, which only depend on the base point $x$, one concludes that these two connections are indeed gauge equivalent with respect to the gauge transformation $\phi$, as claimed.
\begin{remark}\label{remarkcoeff}
For future reference, notice that the horizontal lift obtained by the spin frame $(P,\tilde e)$ can be rewritten in the trivialization induced by that spin frame. Knowing that $\tilde e(p)=(x,\overset{\scriptscriptstyle(\alpha)}{\tilde e}_b)\tilde\epsilon'^b_a$, we get
\begin{align}\nonumber
T_p\tilde e(\hat\omega(v)|_{p})=v^\mu(\partial_\mu|_{\tilde e(p)}-\omega^{ab}_\mu(x)\tilde\rho'_{ab}|_{\tilde e(p)}),
\end{align}
where $\tilde\rho'_{ab}=\tilde\epsilon'^c_d\tilde\partial'{}_{[a}^d\eta{}^{}_{b]c}$ are the vertical vector fields on $T\tilde e(P)$. Notice now that, when written in the two induced trivializations, the coefficients of the vertical part of both connections on $L(M)$ are the same as the ones of $\hat\omega$. 
\end{remark}
This last discussion showed how to induce connections on the frame bundle, once the coefficients $\omega^{ab}_\mu$ are given: they can be, in theory, completely unrelated to the induced metric structure. As a consequence, the Dirac equation built out of it, will depend on the metric only through the spin frame in front of the covariant derivative of the spinor field.\\
We now move on to analyse the case in which the connection depends on the induced metric structure.


\section{Projectable connections on the frame bundle}\label{sec4}
In this Section we shall discuss the case where principal connections on the spin bundle are related to the induced metric by spin frames. This different approach consists on starting with connections on the frame bundle and pulling them back to the spin bundle. Furthermore we will exploit the affine structure of connections to study how to induce different connections on $P$.\\ 
Let us then start from a connection on the frame bundle $H\in TL(M)$: since we are not considering spin frames yet, we start by writing the related horizontal lift in the natural trivialization. Indeed a generic point can be written as $(x,\epsilon_a)=(x,\partial_\mu)\epsilon^\mu_a$ and
\begin{align}
\omega(v)|_{(x,\epsilon_a)}=v^\mu\left(\partial_\mu|_{(x,\epsilon_a)}-\omega^\alpha_{\beta\mu}(x)\rho^\beta_\alpha |_{(x,\epsilon_a)}\right),
\end{align}
where $\rho^\beta_\alpha|_{(x,\epsilon_a)}=\epsilon^\beta_a\partial^a_\alpha$ are the right invariant vector fields on $L(M)$. Consider now the following lemma.
\begin{lemma}
Let H be a connection on $L(M)$ and $(P,e)$ a spin frame. $H$ is tangent to $e(P)=SO_e(M,g)$ in a point $(x,\epsilon_a)$ (and hence in all of its points) if and only if $$\omega^{(ab)}_\mu:=\eta^{c(a}\omega^{b)}{}_{c\mu}=0,$$ where the $(ab)$ indicates that the indices are symmetrised and 
\begin{align}\label{omegae}
\omega^b{}_{c\mu}(x)=e^b_\alpha(x)\left(\omega^\alpha_{\beta\mu}(x)e^\beta_c(x)+\partial_\mu e^\alpha_c(x) \right).
\end{align}
\end{lemma}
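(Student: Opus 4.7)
The plan is to pass from the natural trivialization of $L(M)$, in which the horizontal lift is given by the data $\omega^\alpha_{\beta\mu}$, to the trivialization adapted to the spin frame $(P,e)$, in which the sub-bundle $e(P)=SO_e(M,g)$ becomes the standard reduction of $L(M)$ along the inclusion $SO_e(r,s)\hookrightarrow GL(m,\mathbb R)$. In this adapted picture the tangency of a principal connection $H$ to $e(P)$ at a point reduces to the purely algebraic requirement that the vertical part of the horizontal lift take values in the Lie subalgebra $\mathfrak{so}(r,s)\subset\mathfrak{gl}(m,\mathbb R)$. By the $SO_e(r,s)$-equivariance of $H$, this condition transports freely along the fibre of $e(P)$ over $x$, so that tangency at one point of that fibre is equivalent to tangency at every point, and it is therefore enough to carry out the computation at the distinguished representative $(x,e_a(x))$.

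Next I would perform the change of trivialization explicitly. Writing $\rho^\beta_\alpha\big|_{(x,e_a)}=e^\beta_b\,\partial^b_\alpha$ and re-expressing the right-invariant vertical vector fields in the adapted basis, the usual chain-rule manipulation produces the inhomogeneous term $\partial_\mu e^\alpha_b$ and gives, for the vertical component of $\omega(v)|_{(x,e_a)}$, coefficients equal to those of \eqref{omegae}, namely $\omega^a{}_{b\mu}=e^a_\alpha(\omega^\alpha_{\beta\mu}e^\beta_b+\partial_\mu e^\alpha_b)$. This calculation is entirely parallel to the gauge transformation of $\omega^{ab}_\mu$ displayed at the end of Section \ref{sec3}, the only difference being that here the "gauge" $\phi$ is the matrix $e^a_\alpha$ mediating the change between the natural and the orthonormal frame.

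The final step is algebraic: a matrix $M^a{}_b\in\mathfrak{gl}(m,\mathbb R)$ lies in $\mathfrak{so}(r,s)$ exactly when $\eta_{ca}M^c{}_b+\eta_{cb}M^c{}_a=0$, which after raising one index with $\eta$ is equivalent to $\eta^{c(a}M^{b)}{}_c=0$. Applied to $M^a{}_b=\omega^a{}_{b\mu}$ this is precisely the stated condition $\omega^{(ab)}_\mu=0$, which combined with the previous paragraphs closes both directions of the equivalence. I expect the only substantive difficulty to lie in the index bookkeeping of the change of trivialization, specifically verifying that the inhomogeneous $\partial_\mu e^\alpha_b$ term emerges with the placement and sign required to match \eqref{omegae} exactly; the geometric content, that metric principal connections on $L(M)$ are characterised by their connection one-form being $\eta$-antisymmetric in any orthonormal gauge, is classical.
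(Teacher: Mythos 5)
Your proposal is correct and follows essentially the same route as the paper: both pass to the trivialization induced by the spin frame to obtain the coefficients \eqref{omegae}, and then characterise tangency to $e(P)$ by the vertical part lying in the skew-symmetric (i.e.\ $\mathfrak{so}(r,s)$) generators. The paper phrases the final algebraic step as a decomposition of $\omega^{ab}_\mu\rho_{ab}$ into $\rho_{(ab)}$ and $\rho_{[ab]}$ parts, with only the latter being right-invariant fields on $SO_e(M,g)$, which is exactly your Lie-subalgebra membership condition $\eta^{c(a}\omega^{b)}{}_{c\mu}=0$ in different notation.
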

\begin{proof}
We begin by expressing our horizontal vector in the trivialization induced by the spin frame $(P,e)$. If $(x,\overset{\scriptscriptstyle(\alpha)}{e}_a)=(x,\partial_\mu)\overset{\scriptscriptstyle(\alpha)}{e}{}^\mu_a$ is such section, we get
\begin{align}\nonumber
\omega(v)|_{(x,\epsilon_a)}=v^\mu(\partial_\mu|_{(x,\epsilon_a)}-\omega^{a}{}_{b\mu}(x)\rho_{a}^b|_{(x,\epsilon_a)}),
\end{align}
where $\omega^{a}{}_{b\mu}(x)$ is exactly given by \eqref{omegae}. Notice that we suppressed the index $\alpha$ labelling the section $\overset{\scriptscriptstyle(\alpha)}{\sigma}$ on $P$ to ease the reading. We already analysed how a change of trivialization on $P$ influences the induced connection in the proof of Proposition \ref{propF(P)}.\\
We are now ready to first prove that $\omega^{(ab)}_\mu=0\implies H\in T_{(x,\epsilon_a)}SO_e(M,g)$: the right invariant vector fields on $SO_e(M,g)$ are 
\begin{align}
\rho_{[ab]}=\eta_{c[a}\rho^c_{b]},
\end{align}
where $[ab]$ indicates skew symmetric indices. In light of this, we rewrite the lifted vectors as
\begin{align}\nonumber
\omega(v)|_{(x,\epsilon_a)}&=v^\mu\left(\partial_\mu|_{(x,\epsilon_a)}-\omega^{ab}_{\mu}(x)\rho_{ab}|_{(x,\epsilon_a)} \right)\\ \nonumber
&=v^\mu\left(\partial_\mu|_{(x,\epsilon_a)}-\omega^{ab}_{\mu}(x)(\rho_{(ab)}+\rho_{[ab]})|_{(x,\epsilon_a)} \right)\\ \nonumber
&=v^\mu\left(\partial_\mu|_{(x,\epsilon_a)}-\omega^{[ab]}_{\mu}(x)\rho_{[ab]}|_{(x,\epsilon_a)} \right),
\end{align}
where in the last step we made use of the hypothesis.\\
On the contrary, take a horizontal vector of $L(M)$
\begin{align}\nonumber
\omega(v)|_{(x,\epsilon_a)}&=v^\mu\left(\partial_\mu|_{(x,\epsilon_a)}-\omega^{ab}_{\mu}(x)\rho_{ab}|_{(x,\epsilon_a)} \right)\\ \nonumber
&=v^\mu\left(\partial_\mu|_{(x,\epsilon_a)}-\omega^{(ab)}_{\mu}(x)\rho_{(ab)}|_{(x,\epsilon_a)}-\omega^{[ab]}_{\mu}(x)\rho_{[ab]}|_{(x,\epsilon_a)} \right).
\end{align}
If we want it to be tangent to $SO_e(M,g)$, we need to require the coefficient of $\rho_{(ab)}$, which are not right invariant vector fields on $SO_e(M,g)$, to vanish. We then get our thesis $\omega^{(ab)}_{\mu}(x)=0$.
\end{proof}
In light of the previous lemma, we now prove the following theorem, which constrains the vertical part of any connection of $L(M)$ to be in a specific form, namely a contorsion-type tensor belonging to $A_2(TM)\otimes T^*M$, if we want it to project on an orthonormal subbundle $e(P)=SO_e(M,g)$.
\begin{theorem}\label{project}
Let $H$ be a connection of $L(M)$, $v\in TM$ and $\{g\}$ be the horizontal lift induced by Levi-Civita connection of the metric $g$. $H$ then projects on $e(P)=SO_e(M,g)$ if and only if 
\begin{align}
\omega(v)|_{(x,\epsilon_a)}=(\{g\}(v)+K(v))|_{(x,\tilde\epsilon_a)},
\end{align}
where $(x,\epsilon_a)\in e(P)$ and $K(v)$ is defined as
\begin{align}\nonumber
K: \ &TM\to V(L(M))\\ \nonumber
&v\mapsto g^{\alpha\gamma}K_{\gamma\beta\mu}v^\mu\rho_\alpha^\beta,
\end{align}
in terms of the contorsion tensor whose coefficients are given by
\begin{align} \nonumber
K_{\gamma\beta\mu}=\frac{1}{2}\Big (g(\partial_\beta,T(\partial_\mu,\partial_\gamma))+g(\partial_\mu,T(\partial_\beta,\partial_\gamma))+g(\partial_\gamma,T(\partial_\beta,\partial_\mu))\Big).
\end{align}
\end{theorem}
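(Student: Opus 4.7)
The plan is to combine the preceding lemma with the affine structure of the space of principal connections on $L(M)$. By the lemma, $H$ projects onto $e(P)=SO_e(M,g)$ iff the orthonormal-frame coefficients satisfy $\omega^{(ab)}_\mu=0$; pulled back to the natural coordinate frame through \eqref{omegae}, this is exactly the metric-compatibility condition $\nabla_\mu g_{\alpha\beta}=0$. Since the Levi-Civita lift $\{g\}$ is both metric-compatible and torsion-free, it already projects, so the statement reduces to characterising the difference tensor $D:=\omega-\{g\}\in\Omega^1(M,\mathrm{End}(TM))$ which must keep $\omega$ metric-compatible.

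For the only-if direction I would expand $\nabla_\mu g_{\alpha\beta}=0$ for both $\omega$ and $\{g\}$ and subtract to obtain the antisymmetry
\begin{align*}
D_{\alpha\beta\mu}+D_{\beta\alpha\mu}=0,
\end{align*}
where indices have been lowered by $g$. Since $\{g\}$ is torsion-free, the torsion of $\omega$ satisfies $T^{\alpha}{}_{\beta\mu}=D^{\alpha}{}_{\beta\mu}-D^{\alpha}{}_{\mu\beta}$. A standard cyclic-permutation trick, taking the identity $D_{\alpha\beta\mu}-D_{\alpha\mu\beta}=T_{\alpha\beta\mu}$ together with its images under $(\alpha,\beta,\mu)\mapsto(\mu,\alpha,\beta)$ and $(\alpha,\beta,\mu)\mapsto(\beta,\mu,\alpha)$ and forming the appropriate alternating sum, uses the antisymmetry repeatedly to collapse the $D$-terms to a single one, yielding $D_{\gamma\beta\mu}=K_{\gamma\beta\mu}$ with exactly the $K$ of the statement.

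For the if direction I would take the explicit formula for $K$ and verify directly that $K_{\gamma\beta\mu}=-K_{\beta\gamma\mu}$, applying the torsion antisymmetry $T(X,Y)=-T(Y,X)$ to each of the three summands in $K$. Converting through \eqref{omegae} between natural and orthonormal indices, this antisymmetry is precisely $\omega^{(ab)}_\mu=0$, so the lemma delivers projection onto $e(P)$. The base point $(x,\tilde\epsilon_a)$ appearing on the right-hand side of the statement is merely bookkeeping, recording that the sum $\{g\}(v)+K(v)$ is most naturally evaluated at the frame obtained from $(x,\epsilon_a)$ by the right-action implicit in comparing two horizontal lifts; right-invariance of the horizontal distributions makes this pointwise identification automatic.

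The step I expect to be the main obstacle is the cyclic-sum manipulation producing the explicit formula for $K$: algebraically this is the classical Koszul-type argument, but it requires careful tracking of index positions (upper versus lower, and order) together with a clean passage between natural and orthonormal indices via the spin frame, so as to land on the precise symmetrisation pattern of $K_{\gamma\beta\mu}$ without sign or placement errors.
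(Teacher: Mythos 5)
Your proposal is correct and follows essentially the same route as the paper: both reduce the statement to the lemma's condition $\omega^{(ab)}_\mu=0$, identify it with metric compatibility, extract the contorsion by the Koszul-type cyclic permutation of indices, and prove the converse from the skew-symmetry $K_{(\gamma\beta)\mu}=0$. Your variant of first isolating the difference tensor $D=\omega-\{g\}$ and cycling the torsion identity, rather than cycling $\partial_\mu g_{\rho\sigma}$ directly as the paper does, is only a cosmetic reorganization of the same computation.
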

\begin{proof}
In order to prove this theorem, we need to show that, under our assumptions, 
\begin{align}
\omega^\alpha_{\beta\mu}=\{g\}^\alpha_{\beta\mu}+g^{\alpha\gamma}K_{\gamma\beta\mu},
\end{align}
where $\{g\}^\alpha_{\beta\mu}$ are the Christoffel symbols of the induced metric $g$. This result follows from the previous lemma: indeed, by analysing the implications of the requirement $\omega^{(ab)}_\mu=0$, one finds
\begin{align}\nonumber
\omega^{ab}_\mu+\omega^{ba}_\mu&=e^a_\alpha\omega^\alpha_{\beta\mu}e^{\beta b}+e^b_\alpha\omega^\alpha_{\beta\mu}e^{\beta a}+e^a_\alpha\partial_\mu e^{\alpha b}+e^b_\alpha\partial_\mu e^{\alpha a}\\ \nonumber
&=e^a_\alpha\omega^\alpha_{\beta\mu}e^{\beta b}+e^b_\alpha\omega^\alpha_{\beta\mu}e^{\beta a}+e^a_\alpha\partial_\mu e^{\alpha b}+e^b_\alpha\partial_\mu(g^{\alpha\lambda}e_\lambda^a)=0.
\end{align}
This implies that 
\begin{align}\nonumber
\partial_\mu g_{\rho\sigma}=g_{\sigma\lambda}\omega^\lambda_{\rho\mu}+g_{\rho\alpha}\omega^{\alpha}_{\sigma\mu}.
\end{align}
By cyclicly permuting the indices and by appropriately summing and subtracting the obtained relations, we get
\begin{align}\nonumber
-\partial_\rho g_{\sigma\mu}+\partial_\mu g_{\rho\sigma}+\partial_\sigma g_{\mu\rho}=g_{\mu\lambda}T^{\lambda}_{\rho\sigma}+g_{\sigma\lambda}T^{\lambda}_{\rho\mu}+g_{\rho\lambda}T^{\lambda}_{\mu\sigma}+2g_{\rho\lambda}\omega^\lambda_{\sigma\mu},
\end{align}
where $T^\lambda_{\alpha\beta}:=\omega^\lambda_{\alpha\beta}-\omega^\lambda_{\beta\alpha}$ is the torsion of the connection. It is skew symmetric in its lower indices by construction. Then one easily gets
\begin{align}\nonumber
\omega^\gamma_{\sigma\mu}&=\{g\}^\gamma_{\sigma\mu}+\frac{1}{2}g^{\gamma\rho}(g_{\mu\lambda}T^{\lambda}_{\sigma\rho}+g_{\sigma\lambda}T^{\lambda}_{\mu\rho}+g_{\rho\lambda}T^{\lambda}_{\sigma\mu})\\ \nonumber
&=\{g\}^\gamma_{\sigma\mu}+\frac{1}{2}g^{\gamma\rho}(T_{\mu\sigma\rho}+T_{\sigma\mu\rho}+T_{\rho\sigma\mu}),
\end{align}
which is our thesis. \\
Before proving the opposite implication, we remark that the contorsion is skew symmetric in its first two indices, meaning 
\begin{align}\label{Kindices}
K_{(\alpha\beta)\gamma}=0,
\end{align}
as a consequence of the property of the torsion. Then
\begin{align}\nonumber
\omega^{ab}_\mu=e^a_\alpha(\{g\}^\alpha_{\beta\mu}e^{\beta b}+\partial_\mu e^{\alpha b})+e^{a\gamma}K_{\gamma\beta\mu}e^{\beta b}\implies \omega^{(ab)}_\mu=0,
\end{align}
where the first term can be easily shown to be skew symmetric by explicitly writing the Christoffel symbols, while the second is a consequence of \eqref{Kindices}.
\end{proof}
The content of this theorem can be summed up by saying that projectable connections form an affine subspace of all connections or that the space of projectable  connection is modelled on {\it contorsion-type
tensors}, i.e.~it is a submodule of dimension $\frac{m^2}{2}(m-1)$ in the module of connections, which has instead dimension $m^3$.\\
As a consequence of this key theorem, we now have the following two corollaries.
\begin{corollary}
A torsionless metric connection $\omega=\{g\}$ trivially projects on $SO_e(M,g)$. 
\end{corollary}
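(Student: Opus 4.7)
The plan is to apply Theorem \ref{project}. The corollary should follow immediately once we recognise that the contorsion tensor is constructed entirely out of the torsion, and hence collapses to zero for the Levi-Civita connection. Concretely, for $\omega = \{g\}$, the defining property is vanishing torsion, $T^\lambda_{\alpha\beta} := \{g\}^\lambda_{\alpha\beta} - \{g\}^\lambda_{\beta\alpha} = 0$. Substituting $T = 0$ into
$$K_{\gamma\beta\mu} = \tfrac{1}{2}\Big(g(\partial_\beta,T(\partial_\mu,\partial_\gamma)) + g(\partial_\mu,T(\partial_\beta,\partial_\gamma)) + g(\partial_\gamma,T(\partial_\beta,\partial_\mu))\Big)$$
forces $K\equiv 0$, so the characterisation supplied by the theorem reduces to $\omega(v) = \{g\}(v) + 0 = \{g\}(v)$, which is tautologically of the projectable form, proving the statement.

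As a sanity check, and also as a self-contained route which bypasses the theorem and appeals only to the preceding lemma, one can verify the equivalent condition $\omega^{(ab)}_\mu = 0$ by hand. Metric compatibility of the Levi-Civita connection, $\nabla g=0$, reads $\partial_\mu g_{\rho\sigma} = g_{\sigma\lambda}\{g\}^\lambda_{\rho\mu} + g_{\rho\lambda}\{g\}^\lambda_{\sigma\mu}$; translating this identity to spin-frame indices via \eqref{omegae} immediately yields $\omega^{ab}_\mu + \omega^{ba}_\mu = 0$, and the lemma concludes.

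There is no real obstacle here: the content of the corollary is the observation that when both torsion and non-metricity vanish, the ``free parameter'' $K$ in the affine description of projectable connections collapses to zero, so the Levi-Civita lift sits inside the orthonormal subbundle $SO_e(M,g)$ without any further adjustment.
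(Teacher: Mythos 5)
Your proposal is correct and matches the paper's treatment: the paper states this corollary without proof precisely because, as you observe, a torsionless connection has vanishing torsion and hence vanishing contorsion $K$, so Theorem \ref{project} applies trivially with $\omega(v)=\{g\}(v)$. Your secondary check via metric compatibility and the condition $\omega^{(ab)}_\mu=0$ is also sound; it simply reruns (in reverse) the computation already contained in the proof of the theorem.
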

\begin{corollary}
For any vector valued tensor $I: \ TM\times TM\to TM$ satisfying $I(X,Y)=-I(Y,X)$ for any $X,Y\in TM$, one can obtain one and only one projectable connection on $SO_e(M,g)$.
\end{corollary}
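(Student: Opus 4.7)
The plan is to exhibit an explicit bijection between antisymmetric vector-valued tensors $I$ and projectable connections on $L(M)$, by identifying $I$ with the torsion of the connection and reading off the contorsion from the formula supplied by Theorem~\ref{project}. Since the theorem already tells us that every projectable connection has the shape $\omega = \{g\} + K$ with $K$ a contorsion-type tensor, all the work is in showing that $K$ is determined by (and in turn determines) the torsion, giving both existence and uniqueness.

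For existence, given an antisymmetric $I \in A_2(TM)\otimes TM$, I would regard its components $I^\lambda_{\alpha\beta}$ (skew in the two lower indices) as the candidate torsion $T^\lambda_{\alpha\beta}$, and define
\begin{align*}
K_{\gamma\beta\mu} := \tfrac{1}{2}\bigl(g_{\beta\lambda}T^\lambda_{\mu\gamma} + g_{\mu\lambda}T^\lambda_{\beta\gamma} + g_{\gamma\lambda}T^\lambda_{\beta\mu}\bigr),
\end{align*}
then set $\omega^\gamma_{\sigma\mu} := \{g\}^\gamma_{\sigma\mu} + g^{\gamma\rho}K_{\rho\sigma\mu}$. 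By the skew-symmetry of $T$ in its lower indices, $K_{(\gamma\beta)\mu}=0$, so $\omega^{(ab)}_\mu = 0$ in any orthonormal frame and the previous lemma ensures that the corresponding horizontal distribution is tangent to $SO_e(M,g)$. Hence $\omega$ is projectable.

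The key consistency check is that the torsion of this constructed $\omega$ really equals $I$. Since $\{g\}^\gamma_{\sigma\mu}$ is symmetric in $\sigma,\mu$, a short computation gives
\begin{align*}
\omega^\gamma_{\sigma\mu} - \omega^\gamma_{\mu\sigma} = g^{\gamma\rho}\bigl(K_{\rho\sigma\mu} - K_{\rho\mu\sigma}\bigr),
\end{align*}
and substituting the definition of $K$ the three terms collapse, using $T_{\rho\mu\sigma}=-T_{\rho\sigma\mu}$, to $g^{\gamma\rho}g_{\rho\lambda}T^\lambda_{\sigma\mu} = T^\gamma_{\sigma\mu} = I^\gamma_{\sigma\mu}$. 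This is the only nontrivial algebraic step and I expect it to be the main obstacle, though it is essentially the inversion of the derivation carried out inside the proof of Theorem~\ref{project}.

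For uniqueness, suppose $\omega$ and $\omega'$ are two projectable connections whose torsion equals $I$. Theorem~\ref{project} forces both to be of the form $\{g\}+K$ with the contorsion expressed by the same universal formula in terms of the torsion; since the torsions coincide, so do the contorsions, and therefore $\omega=\omega'$. This yields the stated bijection and, combining with the previous corollary (the $I\equiv 0$ case), shows that the space of projectable connections is an affine space modelled on antisymmetric vector-valued $2$-tensors, consistent with the dimension count $\tfrac{m^2}{2}(m-1)$ stated just above.
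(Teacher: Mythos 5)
Your proof is correct and follows essentially the same route as the paper: build $\omega=\{g\}+g^{-1}K$ with the contorsion $K$ assembled from $I$ via the formula of Theorem~\ref{project}, check that the resulting torsion is $I$, and invoke the theorem for uniqueness. You merely spell out the skew-symmetry check $K_{(\gamma\beta)\mu}=0$ and the uniqueness step, which the paper leaves implicit.
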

\begin{proof}
Consider indeed the combination
\begin{align} \nonumber
\omega^\alpha_{\beta\mu}=\{g\}^\alpha_{\beta\mu}+\frac{1}{2}g^{\alpha\gamma}(I_{\beta\mu\gamma}+I_{\mu\beta\gamma}+I_{\gamma\beta\mu}).
\end{align}
By taking the skew symmetric part of this equation, we get
\begin{align}
T^\alpha_{\beta\mu}=I^\alpha_{\beta_\mu},
\end{align}
so we see that the chosen tensor $I^\alpha_{\beta\mu}$ assumes in fact the role of the torsion of the connection. In light of the previous theorem, we can conclude our thesis.
\end{proof}
This last corollary allows us to build a projectable connection with torsion from a torsionless one, as long as one has a skew symmetric tensor to begin with.	\\
Once a connection is projected on the subbundle $e(P)$, it is straightforward to pull it back to the spin bundle through the spin frame $(P,e)$: this procedure defines a connection $\hat H$ whose horizontal vectors are given by
\begin{align}\label{pullbackconn}
\hat\omega(v)|_{p}=v^\mu(\partial_\mu|_{p}-\omega^{ab}_\mu(x)\sigma_{ab}|_p),
\end{align}
where the coefficients $\omega^{ab}_\mu(x)$ are 
\begin{align}
\omega^{ab}_{\mu}(x)=e^a_\alpha(x)\left((\{g\}^\alpha_{\beta\mu}+g^{\alpha\gamma}K_{\gamma\beta\mu})e^\beta_c(x)+\partial_\mu e^\alpha_c(x) \right)\eta^{cb}.
\end{align}
as a consequence of Theorem \ref{project}.\\
Suppose now you have another spin frame $(P,\tilde e)$: since everything we proved until now is valid for any spin frame, one would repeat the same arguments, which we now briefly sum up. Take a connection $\tilde H$ on the frame bundle, project it on $\tilde e(P)$ and pull it back on the spin bundle. This procedure clearly yields
\begin{align}
\begin{cases}
\hat{\tilde\omega}(v)|_{p}=v^\mu(\partial_\mu|_{p}-\Omega^{ab}_\mu(x)\sigma_{ab}|_p),\\
\Omega^{ab}_{\mu}(x)=\tilde e^a_\alpha(x)\left((\{ \tilde g\}^\alpha_{\beta\mu}+\tilde g^{\alpha\gamma}\tilde K_{\gamma\beta\mu})\tilde e^\beta_c(x)+\partial_\mu \tilde e^\alpha_c(x) \right)\eta^{cb},
\end{cases}
\end{align}
where $\tilde g$ is the new induced metric and $\tilde K_{\gamma\beta\mu}$ is the contorsion of the new connection, completely unconstrained. If no spin frame transformation is given, the obtained connection will be different from \eqref{pullbackconn}. We will come back on this later on, when we will discuss Dirac equations.\\
As we will now show, spin frame transformations have the property to select a particular connection of all the possible $\tilde H$, with the same pullback \eqref{pullbackconn} on $P$. Take indeed $T\Phi(H)$: this induced connection is clearly different from the original one, as discussed previously and we can see if it is tangent to $\tilde e(P)$. In order to do that, we first prove the following lemma.
\begin{lemma}\label{lemmah}
Let $\{g\}$ and $\{\tilde g\}$ be two horizontal lifts induced by the Levi-Civita connections of the metric structures defined by $(P,e)$ and $(P,\tilde e)$. Then
\begin{align}\nonumber
(\{\tilde g\}-\{g\})(v)|_{(x,\tilde\epsilon_a)}=h(v)|_{(x,\tilde\epsilon_a)},
\end{align}
where $h: \ TM\to V(L(M))$ satisfies
\begin{align}
h(v)=v^\mu\bigg(\phi^\alpha_\gamma\overset{\{g\}}{\nabla}^{\ }_{(\beta}\bar\phi^\gamma_{\mu)}+\phi^\alpha_\gamma g^{\gamma\delta}\phi^{\lambda}_\delta g_{\rho\sigma}\bar\phi^\rho_{(\beta}\overset{\{g\}}{\nabla}^{}_{\mu)}\bar\phi^\sigma_\lambda-\phi^{\alpha}_\gamma g^{\gamma\delta}\phi^\lambda_\delta g_{\rho\sigma}\overset{\{g\}}{\nabla}^{}_\lambda\bar\phi^\rho_{(\beta}\bar\phi^\sigma_{\mu)}\bigg)\tilde\rho^\beta_\alpha.
\end{align}
Notice that we suppressed the dependence of $\phi$ in order to ease the reading.
\end{lemma}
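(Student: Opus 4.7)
Since both Levi-Civita connections are torsion-free, the difference $C^\alpha_{\beta\mu}:=\{\tilde g\}^\alpha_{\beta\mu}-\{g\}^\alpha_{\beta\mu}$ is a tensor, symmetric in its lower indices. Moreover, since two horizontal lifts of the same vector $v$ differ by a vertical vector, the quantity $(\{\tilde g\}-\{g\})(v)|_{(x,\tilde\epsilon_a)}$ lies in $V_{(x,\tilde\epsilon_a)}L(M)$, which is spanned by the right-invariant fields $\tilde\rho^\beta_\alpha$ at that point. Hence the lift-difference must take the form $v^\mu C^\alpha_{\beta\mu}\tilde\rho^\beta_\alpha$, and the plan is simply to compute $C^\alpha_{\beta\mu}$ explicitly and recognise it as the coefficient appearing inside the large parenthesis of $h(v)$.

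First I would establish a Koszul-type formula for $C$. Writing $\tilde\nabla=\nabla+C$ (where $\nabla\equiv\nabla^{\{g\}}$) and imposing $\tilde\nabla\tilde g=0$ yields $\nabla_\mu\tilde g_{\alpha\beta}=C^\gamma_{\alpha\mu}\tilde g_{\gamma\beta}+C^\gamma_{\beta\mu}\tilde g_{\alpha\gamma}$. A cyclic permutation of the free indices, combined with the lower symmetry of $C$, gives
\[
C^\alpha_{\beta\mu}=\tfrac{1}{2}\,\tilde g^{\alpha\lambda}\bigl(\nabla_\beta\tilde g_{\lambda\mu}+\nabla_\mu\tilde g_{\lambda\beta}-\nabla_\lambda\tilde g_{\beta\mu}\bigr).
\]
Then I would substitute the metric transformation law proved in the previous proposition, $\tilde g_{\mu\nu}=\bar\phi^\rho_\mu\,g_{\rho\sigma}\,\bar\phi^\sigma_\nu$, together with its inverse $\tilde g^{\alpha\lambda}=\phi^\alpha_\gamma\,g^{\gamma\delta}\,\phi^\lambda_\delta$. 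Expanding each $\nabla_X\tilde g_{YZ}$ via the Leibniz rule and using metric compatibility $\nabla g=0$, only derivatives of $\bar\phi$ survive.

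In the pieces where $\tilde g^{\alpha\lambda}$ contracts through $\phi^\lambda_\delta\bar\phi^\rho_\lambda=\delta^\rho_\delta$, the prefactor collapses all the way to $\phi^\alpha_\sigma$, and after symmetrising on $(\beta,\mu)$ (which is legitimate by the lower symmetry of $C$) this contribution reproduces the first summand $\phi^\alpha_\sigma\nabla_{(\beta}\bar\phi^\sigma_{\mu)}$ of $h$. The remaining pieces retain the full prefactor $\phi^\alpha_\gamma g^{\gamma\delta}\phi^\lambda_\delta g_{\rho\sigma}$ and split, after the same $(\beta,\mu)$-symmetrisation, into the two remaining terms of $h(v)$; the overall factor of $\tfrac12$ is absorbed by the doubling that arises because two distinct Leibniz-expanded summands fold onto one another under symmetrisation.

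The main obstacle is not conceptual but combinatorial: keeping the index bookkeeping straight, identifying which factors collapse via $\phi\bar\phi=\mathbb I$ and which must be kept, and using the symmetry $g_{\rho\sigma}=g_{\sigma\rho}$ to equate variants such as $g_{\rho\sigma}\bar\phi^\rho_{(\beta}\nabla_\lambda\bar\phi^\sigma_{\mu)}=g_{\rho\sigma}(\nabla_\lambda\bar\phi^\rho_{(\beta})\bar\phi^\sigma_{\mu)}$ so that the third term of $h(v)$ appears in exactly the form stated, with the correct sign inherited from the $-\nabla_\lambda\tilde g_{\beta\mu}$ piece of the Koszul formula. Once these identifications are made the three summands match $h(v)$ verbatim, concluding the proof.
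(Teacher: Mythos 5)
Your proposal is correct and follows essentially the same route as the paper: both rest on metric compatibility of the two Levi-Civita connections, the symmetry of the difference tensor in its lower indices, and a cyclic permutation of indices (your ``Koszul-type formula'' is exactly the paper's permute-and-combine step, just performed before rather than after substituting $\tilde g_{\mu\nu}=\bar\phi^\rho_\mu g_{\rho\sigma}\bar\phi^\sigma_\nu$). The subsequent Leibniz expansion, the collapse of the prefactor via $\phi^\lambda_\delta\bar\phi^\rho_\lambda=\delta^\rho_\delta$ in the first term, and the absorption of the factor $\tfrac12$ by the $(\beta,\mu)$-symmetrisation all check out and reproduce $h(v)$ as stated.
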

\begin{proof}
We use the fact that both Levi-Civita connections are compatible with the corresponding metrics, i.e. in the natural trivialization $$\overset{\{g\}}{\nabla}_\mu g_{\alpha\beta}=0, \qquad\overset{\{\tilde g\}}{\nabla}_\mu \tilde g_{\alpha\beta}=0,$$ where the two covariant derivatives are the ones induced on the associated bundle $Lor(M)$. Take now a tensor in $TM \otimes S_2(M)$, whose coefficients will be called $h^\alpha_{\beta\mu}$ and satisfy $h^\alpha_{\beta\mu}= \{\tilde g\}^\alpha_{\beta\mu} -\{g\}^\alpha_{\beta\mu} $, then
\begin{align}\nonumber
0=&\overset{\{\tilde g\}}{\nabla}_\mu  \tilde g_{\alpha\beta} = \overset{\{g\}}{\nabla}_\mu \tilde g_{\alpha\beta}-h^\epsilon_{\alpha\mu}  \tilde g_{\epsilon\beta}  - h^\epsilon_{\beta\mu}  \tilde g_{\alpha\epsilon}\\ \nonumber
&=\overset{\{g\}}{\nabla}_\mu  \bar \phi_\alpha^\rho  g_{\rho\sigma}   \bar \phi_\beta^\sigma +   \bar \phi_\alpha^\rho  g_{\rho\sigma} \overset{\{g\}}{\nabla}_\mu  \bar  \phi_\beta^\sigma  -  \tilde g_{\beta\epsilon} h^\epsilon_{\alpha\mu} -\tilde g_{\alpha\epsilon}  h^\epsilon_{\beta\mu} \\ \nonumber
&=  \tilde g_{\beta\sigma}   \phi_\rho^\sigma\overset{\{g\}}{\nabla}_\mu  \bar \phi_\alpha^\rho + \tilde g_{\alpha\rho} \phi_\sigma^\rho \overset{\{g\}}{\nabla}_\mu\bar \phi_\beta^\sigma - \tilde h_{\beta\alpha\mu}   - \tilde h_{\alpha\beta\mu},
\end{align}
where we set $\tilde h_{\beta \alpha\mu}:= \tilde g_{\beta\epsilon} h^\epsilon_{\alpha\mu}$. This is recast as
$$
\tilde h_{\beta\alpha\mu}+\tilde  h_{\alpha\beta\mu}=\tilde g_{\beta\sigma}\phi_\rho^\sigma\overset{\{g\}}{\nabla}_\mu\bar \phi_\alpha^\rho + \tilde g_{\alpha\rho} \phi_\sigma^\rho\overset{\{g\}}{\nabla}_\mu\bar\phi_\beta^\sigma  
 =2\phi_\rho^\sigma\overset{\{g\}}{\nabla}_\mu\bar \phi_{(\alpha}^\rho\tilde g^{\phantom{\rho}}_{\beta)\sigma}.
 $$
Since $\tilde h_{\alpha[\beta\mu]}=0$, by cyclically permuting the indices, we obtain 
\begin{align}\nonumber
 \tilde h_{\alpha\beta\mu }=\tilde g_{\alpha\sigma}\phi_\rho^\sigma\overset{\{g\}}{\nabla}_{(\mu}  \bar \phi_{\beta)}^\rho + \phi_\sigma^\rho  \tilde g_{\rho(\beta} \overset{\{g\}}{\nabla}_{\mu)}\bar\phi_\alpha^\sigma-\phi_\rho^\sigma\overset{\{g\}}{\nabla}_\alpha\bar\phi_{(\mu}^\rho\tilde g^{\phantom{\rho}}_{\beta)\sigma},
\end{align}
with the expected symmetry in the last two indices. As a result we get 
\begin{align}
\{\tilde g\}^\alpha_{\beta\mu}-\{g\}^\alpha_{\beta\mu}=\phi^\alpha_\gamma\overset{\{g\}}{\nabla}_{(\beta}\bar\phi^\gamma_{\mu)}+\phi^\alpha_\gamma g^{\gamma\delta}\phi^{\lambda}_\delta g_{\rho\sigma}\bar\phi^\rho_{(\beta}\overset{\{g\}}{\nabla}_{\mu)}\bar\phi^\sigma_\lambda-\phi^{\alpha}_\gamma g^{\gamma\delta}\phi^\lambda_\delta g_{\rho\sigma}\overset{\{g\}}{\nabla}_\lambda\bar\phi^\rho_{(\beta}\bar\phi^\sigma_{\mu)},
\end{align}
which concludes our proof: indeed, this is exactly the expression that shows up in the difference $(\{\tilde g\}-\{g\})(v)|_{(x,\tilde\epsilon_a)}$, leading to our thesis.
\end{proof}
The second step is to consider the difference between two vectors lifted through $H$ and $T\Phi (H)$. 
\begin{lemma}\label{lemmak}
Let $\omega$ and $\tilde \omega$ two horizontal lift induced by $H$ and $T\Phi(H)$ respectively. Then
\begin{align}\nonumber
(\tilde\omega-\omega)(v)|_{(x,\tilde\epsilon_a)}=k(v)|_{(x,\tilde\epsilon_a)},
\end{align}
where $k:TM\rightarrow V(L(M))$ satisfies
\begin{align}
k(v)=v^\mu\bigg(\phi^\alpha_\gamma\overset{\omega}{\nabla}_\mu\bar\phi^\gamma_\beta\bigg )\tilde\rho^\beta_\alpha,
\end{align}
$\overset{\omega}{\nabla}$ being the covariant derivative induced by the connection $H$ on the associated bundle of spin frame transformations.\\
We stress out that we are not requiring the connections to be projected on any orthonormal subbundle.
\end{lemma}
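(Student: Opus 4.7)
The plan is to push forward $\omega(v)|_{(x,\epsilon_a)}$ through $T\Phi$, rewrite the result in the basis adapted to the target point $(x,\tilde\epsilon_a)=\Phi(x,\epsilon_a)$, compare with $\omega(v)|_{(x,\tilde\epsilon_a)}$ in the same basis, and identify the vertical difference with a covariant derivative of $\bar\phi$. The crucial preliminary observation is that, since $\Phi$ is a vertical automorphism, $\tilde\omega(v)|_{(x,\tilde\epsilon_a)}=T\Phi\bigl(\omega(v)|_{(x,\epsilon_a)}\bigr)$: indeed $T\Phi(\omega(v)|_{(x,\epsilon_a)})$ lies in $T\Phi(H)_{(x,\tilde\epsilon_a)}$ by construction and projects to $v$, so it is the unique horizontal lift of $v$ for the connection $T\Phi(H)$.

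First, I would write the horizontal lift as $\omega(v)|_{(x,\epsilon_a)}=v^\mu\bigl(\partial_\mu-\omega^\alpha_{\beta\mu}(x)\epsilon^\beta_a\partial^a_\alpha\bigr)$ in the natural trivialization and compute the Jacobian of $\Phi:(x^\mu,\epsilon^\alpha_a)\mapsto(x^\mu,\phi^\alpha_\beta(x)\epsilon^\beta_a)$, which gives
\begin{align}\nonumber
T\Phi(\partial_\mu)=\partial_\mu+(\partial_\mu\phi^\alpha_\beta)\,\epsilon^\beta_a\,\partial^a_\alpha,\qquad T\Phi(\partial^a_\beta)=\phi^\alpha_\beta\,\partial^a_\alpha,
\end{align}
where both right-hand sides are evaluated at $(x,\tilde\epsilon_a)$. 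Substituting $\epsilon^\beta_a=\bar\phi^\beta_\gamma\tilde\epsilon^\gamma_a$ in the push-forward of the vertical part turns every remaining $\partial^a_\alpha|_{(x,\tilde\epsilon_a)}$ into $\tilde\rho^\gamma_\alpha|_{(x,\tilde\epsilon_a)}=\tilde\epsilon^\gamma_a\partial^a_\alpha|_{(x,\tilde\epsilon_a)}$, so that the whole expression is written in a basis intrinsic to the target point.

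Next, since $\omega^\alpha_{\beta\mu}$ depends only on $x$ in the natural trivialization, I would write $\omega(v)|_{(x,\tilde\epsilon_a)}=v^\mu\bigl(\partial_\mu-\omega^\alpha_{\beta\mu}(x)\tilde\rho^\beta_\alpha\bigr)$ and subtract it from $\tilde\omega(v)|_{(x,\tilde\epsilon_a)}$. The $v^\mu\partial_\mu$ pieces cancel, confirming that the difference is vertical and hence of the claimed form $k(v)$ with values in $V(L(M))$. Three residual terms proportional to $\tilde\rho^\beta_\alpha$ survive; combining them via $(\partial_\mu\phi^\alpha_\gamma)\bar\phi^\gamma_\beta=-\phi^\alpha_\gamma\partial_\mu\bar\phi^\gamma_\beta$ (which follows from differentiating $\phi\bar\phi=\mathbb{I}$) and inserting $\delta^\alpha_\epsilon=\phi^\alpha_\gamma\bar\phi^\gamma_\epsilon$ in the bare $\omega^\alpha_{\beta\mu}$ term, one factors $\phi^\alpha_\gamma$ out and recognises the remaining bracket as
\begin{align}\nonumber
\partial_\mu\bar\phi^\gamma_\beta+\omega^\gamma_{\epsilon\mu}\bar\phi^\epsilon_\beta-\omega^\epsilon_{\beta\mu}\bar\phi^\gamma_\epsilon=\overset{\omega}{\nabla}_\mu\bar\phi^\gamma_\beta,
\end{align}
i.e.\ the covariant derivative of $\bar\phi$ viewed as a section of the associated bundle of vertical automorphisms.

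The main obstacle I expect is purely index bookkeeping: carefully distinguishing fibre coordinates at source and target and, in particular, manufacturing the cross term $-\omega^\epsilon_{\beta\mu}\bar\phi^\gamma_\epsilon$ needed to complete $\overset{\omega}{\nabla}_\mu\bar\phi^\gamma_\beta$ via the $\delta=\phi\bar\phi$ insertion described above. Note that, unlike Lemma \ref{lemmah}, no projectability or metric-compatibility hypothesis is invoked and the Christoffel symbols never enter, which is what makes this the purely algebraic sibling of the previous lemma.
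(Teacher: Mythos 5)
Your proposal is correct and follows essentially the same route as the paper: you derive the transformed coefficients $\tilde\omega^\alpha_{\beta\mu}=\phi^\alpha_\gamma(\omega^\gamma_{\delta\mu}\bar\phi^\delta_\beta+\partial_\mu\bar\phi^\gamma_\beta)$ (which the paper simply states as its starting point, while you obtain them explicitly from the Jacobian of $\Phi$), then subtract $\omega^\alpha_{\beta\mu}$ and use $\delta^\alpha_\epsilon=\phi^\alpha_\gamma\bar\phi^\gamma_\epsilon$ together with $(\partial_\mu\phi)\bar\phi=-\phi\,\partial_\mu\bar\phi$ to recognize $\phi^\alpha_\gamma\overset{\omega}{\nabla}_\mu\bar\phi^\gamma_\beta$, exactly as in the paper's first argument. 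The paper additionally sketches an alternative proof under the extra hypothesis that both connections are pullbacks of a single connection on $P$, but that is only a side remark and your argument covers the stated lemma in full generality.
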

\begin{proof}
As a starter, we compute the coefficients of the vertical part of $\tilde\omega(v)$: we have
\begin{align}\label{projectedcoeff}
\tilde\omega^\alpha_{\beta\mu}(x)=\phi^\alpha_\gamma(x)\left(\omega^\gamma_{\delta\mu}(x)\bar\phi_\beta^\delta(x)+\partial_\mu\bar\phi^\gamma_\beta(x)\right).
\end{align}
Take now a tensor in $T(M)\otimes T^*(M)\otimes T^*(M)$ whose coefficients will be called $k^\alpha_{\beta\mu}$ and satisfy $$k^\alpha_{\beta\mu}=\tilde\omega^\alpha_{\beta\mu}-\omega^\alpha_{\beta\mu}.$$ By rearranging the terms one easily gets
\begin{align}
k^\alpha_{\beta\mu}=\phi^\alpha_\gamma\overset{\omega}{\nabla}_\mu\bar\phi^\gamma_\beta,
\end{align}
which is what one encounters when computing $(\tilde\omega-\omega)(v)|_{(x,\tilde\epsilon_a)}$.\\
As a side note, we give a second proof of the same result: for this proof we additionally require $H$ and $T\Phi(H)$ to be the image of a connection $\hat H$ on the spin bundle through two spin frames $(P,e)$ and $(P,\tilde e)$. This is also equivalent to the conditions $\overset{\hat\omega,\omega}{\nabla}_\mu  e^\nu_a=\overset{\hat\omega,\tilde\omega}{\nabla}_\mu \tilde e^\nu_a=0$. From the second condition we have
\begin{align}\nonumber
0=\overset{\hat\omega,\tilde\omega}{\nabla}_\mu \tilde e^\nu_a=\overset{\hat\omega,\omega}{\nabla}_\mu \tilde e^\nu_a+k^\nu_{\alpha\mu}\tilde e^\alpha_a=\left(\overset{\omega}{\nabla}_\mu\phi^\nu_\alpha+k^\nu_{\beta\mu}\phi^\beta_\alpha\right)e^\alpha_a\implies k^\alpha_{\beta\mu}=\phi^\alpha_\gamma\overset{\omega}{\nabla}_\mu\bar\phi^\gamma_\beta,
\end{align}
which is indeed our result.
\end{proof}
We are now ready to write down all possible relations among the relevant connections. In particular, we want to consider projected connections on two orthonormal subbundles $e(P)$ and $\tilde e(P)$. By doing so, we obtain the following theorem, which states what we claimed previously, that spin frame transformations select a particular set of contorsions which allow to pull the connections back onto a single connection in $P$.
\begin{theorem}\label{omegaprimeproject}
Let $H$ be a connection on the frame bundle, projectable on the subbundle $e(P)$ and take $T\Phi(H)$. The latter connection is projectable on $\tilde e(P)$ if and only if its contorsion satisfies
\begin{align}\label{contorsionprime}
\tilde K(v)|_{(x,\tilde\epsilon_a)}=(K(v)+k(v)-h(v))|_{(x,\tilde\epsilon_a)}.
\end{align}
\end{theorem}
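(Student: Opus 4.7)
The plan is to combine Theorem \ref{project} with Lemmas \ref{lemmah} and \ref{lemmak} in a single decomposition argument. By hypothesis $H$ is projectable on $e(P)$, so Theorem \ref{project} lets us write the horizontal lift of $H$ at points of $e(P)$ as $\omega(v)=\{g\}(v)+K(v)$, where $\{g\}$ is the Levi-Civita lift of the induced metric $g$ and $K$ is the associated contorsion term. The target statement is the analogous decomposition $\tilde\omega(v)=\{\tilde g\}(v)+\tilde K(v)$ for $T\Phi(H)$ at points of $\tilde e(P)$, from which $\tilde K$ can be read off; Theorem \ref{project} gives an \textbf{iff}, so both directions are encoded in this equivalence.

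To assemble the pieces, Lemma \ref{lemmak} supplies the identity $\tilde\omega(v)=\omega(v)+k(v)$ evaluated at the transformed point $(x,\tilde\epsilon_a)$, and Lemma \ref{lemmah} supplies $\{\tilde g\}(v)=\{g\}(v)+h(v)$ at the same point. Substituting and using the decomposition of $\omega$,
\begin{align*}
\tilde\omega(v)\big|_{(x,\tilde\epsilon_a)} &= \bigl(\{g\}(v)+K(v)+k(v)\bigr)\big|_{(x,\tilde\epsilon_a)} \\
&= \bigl(\{\tilde g\}(v)+K(v)+k(v)-h(v)\bigr)\big|_{(x,\tilde\epsilon_a)}.
\end{align*}
Invoking Theorem \ref{project} once more, now applied to $T\Phi(H)$ and to the spin frame $(P,\tilde e)$, the connection $T\Phi(H)$ projects on $\tilde e(P)$ if and only if the vertical-valued map $K+k-h$ is of contorsion type, in which case it is exactly the contorsion $\tilde K$ of $T\Phi(H)$ with respect to $\tilde g$. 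This yields the claimed identity \eqref{contorsionprime}, and every step in the chain is an equivalence.

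The main obstacle I expect is showing that $K+k-h$ indeed has contorsion-type symmetry, since $K$ and $\tilde K$ are skew-symmetric in their first two indices by construction whereas the explicit expressions in Lemmas \ref{lemmah} and \ref{lemmak} make it manifest that $k$ and $h$ separately enjoy no such symmetry. One therefore has to verify that the symmetric parts of $k-h$ cancel precisely under the projectability assumption on $H$; concretely, this means computing the symmetric part of \eqref{projectedcoeff} with respect to the tilded metric $\tilde g=\bar\phi^\rho\,g\,\bar\phi^\sigma$ and checking that the terms involving $\overset{\{g\}}{\nabla}\bar\phi$ from $h$ exactly match those produced by $k$ after lowering indices with $\tilde g$. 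Since the projectability condition of Theorem \ref{project} is exactly $\omega^{(ab)}_\mu=0$, this symmetric cancellation is the geometric mechanism by which the hypothesis enters and guarantees the required skew-symmetry of $\tilde K$.
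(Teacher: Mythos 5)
Your proof is correct and follows essentially the same route as the paper's: both directions come from chaining $\tilde\omega=\omega+k$ (Lemma \ref{lemmak}), $\omega=\{g\}+K$ (projectability of $H$ via Theorem \ref{project}) and $\{\tilde g\}=\{g\}+h$ (Lemma \ref{lemmah}), and then reading the resulting decomposition of $\tilde\omega$ against the characterisation of Theorem \ref{project} applied to $(P,\tilde e)$. The skew-symmetry of $K+k-h$ that you flag as the main obstacle is indeed the one residual point; the paper defers it to the remark immediately following the theorem, where $\tilde K$ is computed explicitly as \eqref{tildeK} and $\tilde K_{(\rho\beta)\mu}=0$ is verified.
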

\begin{proof}
The proof of this statement is now straightforward, having proved the previous lemmas. Indeed
\begin{align}\nonumber
\tilde\omega(v)|_{(x,\tilde\epsilon_a)}&=(\omega(v)+k(v))|_{(x,\tilde\epsilon_a)}=(\{g\}(v)+K(v)+k(v))|_{(x,\tilde\epsilon_a)}\\ \nonumber
&=(\{\tilde g\}(v)+\tilde K(v))|_{(x,\tilde\epsilon_a)},
\end{align}
where in the second line we used the fact that $T\Phi(H)$ is projectable. From this we get  
\begin{align}\nonumber
\tilde K(v)|_{(x,\tilde\epsilon_a)}=(K(v)+k(v)-h(v))|_{(x,\tilde\epsilon_a)}.
\end{align}
Vice versa, recast \eqref{contorsionprime} in the following way
\begin{align}\nonumber
\tilde K(v)|_{(x,\tilde\epsilon_a)}&=(K(v)+k(v)-\{\tilde g\}(v)+\{ g\}(v))|_{(x,\tilde\epsilon_a)}\\ \nonumber
\implies (\{\tilde g\}(v)+\tilde K(v))|_{(x,\tilde\epsilon_a)}&=(\{ g\}(v)+K(v)+k(v))|_{(x,\tilde\epsilon_a)}=(\omega(v)+k(v))|_{(x,\tilde\epsilon_a)}=\tilde\omega(v)|_{(x,\tilde\epsilon_a)}.
 \end{align}
 In the first line we used lemma \ref{lemmah}, whereas in the second line we used the fact that $H$ is projectable and lemma \ref{lemmak}. We then see that $T\Phi(H)$ is projectable on $\tilde e(P)$.
\end{proof}
\begin{remark}
We can give an explicit expression for the vertical map $\tilde K$: indeed, in the natural trivialization we have
\begin{align}\nonumber
\tilde K(v)|_{(x,\tilde\epsilon_a)}&=v^\mu\bigg(g^{\alpha\gamma}K_{\gamma\beta\mu}+\phi^\alpha_\gamma\overset{\omega}{\nabla}_\mu\bar\phi^\gamma_\beta-\phi^\alpha_\gamma\overset{\{g\}}{\nabla}^{\ }_{(\beta}\bar\phi^\gamma_{\mu)}\\ \nonumber
&-\phi^\alpha_\gamma g^{\gamma\delta}\phi^{\lambda}_\delta g_{\rho\sigma}\bar\phi^\rho_{(\beta}\overset{\{g\}}{\nabla}^{}_{\mu)}\bar\phi^\sigma_\lambda+\phi^{\alpha}_\gamma g^{\gamma\delta}\phi^\lambda_\delta g_{\rho\sigma}\overset{\{g\}}{\nabla}^{}_\lambda\bar\phi^\rho_{(\beta}\bar\phi^\sigma_{\mu)}\bigg)\tilde\rho_\alpha^\beta|_{(x,\tilde\epsilon_a)} \\ \nonumber
&=v^\mu\bigg(\phi^\alpha_\gamma\overset{\{g\}}{\nabla}^{\ }_{[\mu}\bar\phi^\gamma_{\beta]}-\phi^\alpha_\gamma g^{\gamma\delta}\phi^{\lambda}_\delta g_{\rho\sigma}\bar\phi^\rho_{(\beta}\overset{\{g\}}{\nabla}^{}_{\mu)}\bar\phi^\sigma_\lambda\\ \nonumber
&+\phi^{\alpha}_\gamma g^{\gamma\delta}\phi^\lambda_\delta g_{\rho\sigma}\overset{\{g\}}{\nabla}^{}_\lambda\bar\phi^\rho_{(\beta}\bar\phi^\sigma_{\mu)}+\phi^\alpha_\gamma g^{\gamma\rho}K_{\rho\lambda\mu}\phi^\lambda_\beta \bigg)\tilde\rho_\alpha^\beta|_{(x,\tilde\epsilon_a)}.
\end{align}
This relation is equivalent to giving an expression for the contorsion
\begin{align}\label{tildeK}
\tilde K_{\rho\beta\mu}=g_{\alpha\gamma}\bar\phi^\alpha_\rho\overset{\{g\}}{\nabla}_{[\mu}\bar\phi^\gamma_{\beta]}-g_{\alpha\gamma}\bar\phi^\alpha_{(\beta}\overset{\{g\}}{\nabla}_{\mu)}\bar\phi^\gamma_\rho+g_{\alpha\gamma}\overset{\{g\}}{\nabla}_\rho\bar\phi^\alpha_{(\beta}\bar\phi^\gamma_{\mu)}+\bar\phi^\sigma_\rho K_{\sigma\eta\mu}\bar\phi^\eta_\beta.
\end{align}
The obtained expression is indeed a contorsion as it has the expected tensor properties: it is skew symmetric in the first two indices $$\tilde K_{(\rho\beta)\mu}=0,$$ as one can see if we rewrite it in a way that is more suitable for that scope $$K_{\rho\beta\mu}=\phi_\lambda^\sigma\overset{\{g\}}{\nabla}_{\mu}  \bar \phi_{[\beta}^\lambda  \tilde g^{}_{\rho]\sigma}  
- \phi_\lambda^\sigma \tilde g_{\sigma[\rho} \overset{\{g\}}{\nabla}_{\beta]}  \bar \phi_{\mu}^\lambda    
+\tilde g_{\mu\sigma}  \phi_\lambda^\sigma\overset{\{g\}}{\nabla}^{}_{[\rho}  \bar \phi_{\beta]}^\lambda
+  \bar\phi^\sigma_{[\rho}  \bar \phi^\lambda_{\beta]} K_{\sigma\lambda\mu}. $$
\end{remark}
Among all possible projectable connections along $\tilde g$ (i.e.~for any possible contorsion tensor $\tilde K$),
equation \eqref{tildeK} determines the unique contorsion (for any spin frame transformation) to define a connection $\tilde H$ which is pulled back onto the same connection $\hat \omega$ on $P$.\\
In other words, the set of connections associated to $\hat \omega$ is left invariant by spin frame transformations.
Theorem \ref{omegaprimeproject} can be restated in terms of the torsion of the transported connection, being the relation with the contorsion invertible.
\begin{corollary}
The transported connection $T\Phi(H)$ is projectable on $\tilde e(P)$ if and only if 
\begin{align}\label{torsionprime}
\tilde T^\lambda_{\beta\mu}=2\phi^\lambda_\gamma\overset{\{g\}}{\nabla}_{[\mu}\bar\phi^\gamma_{\beta]}+2\phi^\lambda_\gamma g^{\gamma\sigma}K_{\sigma\eta[\mu}\bar\phi^\eta_{\beta]}.
\end{align}
\end{corollary}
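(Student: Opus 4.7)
The plan is to invoke Theorem \ref{omegaprimeproject} (or equivalently, the explicit form \eqref{tildeK} for $\tilde K$) and then to use the standard bijection between contorsion and torsion to translate the condition on $\tilde K$ into a condition on $\tilde T$. The paper already remarks that this relation is invertible, so the equivalence in the \emph{iff} statement is automatic once I exhibit the correct torsion formula. Concretely, since the Levi-Civita connection is torsionless, the torsion of any projectable connection $\tilde \omega = \{\tilde g\} + \tilde K$ is obtained by antisymmetrising the contorsion in its last two indices and raising the first one with $\tilde g^{-1}$, i.e.~$\tilde T^\lambda_{\beta\mu} = 2\,\tilde g^{\lambda\rho}\,\tilde K_{\rho[\beta\mu]}$.

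The first step is to antisymmetrise the right-hand side of \eqref{tildeK} in $(\beta,\mu)$. The middle two terms of \eqref{tildeK} are manifestly symmetric in the pair $(\beta,\mu)$ (they carry an explicit $(\beta\ldots\mu)$ grouping), so they drop out. What survives is
\begin{align}\nonumber
\tilde K_{\rho[\beta\mu]}
 = g_{\alpha\gamma}\,\bar\phi^\alpha_\rho\,\overset{\{g\}}{\nabla}_{[\mu}\bar\phi^\gamma_{\beta]}
 + \bar\phi^\sigma_\rho\, K_{\sigma\eta[\mu}\,\bar\phi^\eta_{\beta]}.
\end{align}

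The second step is to raise the index $\rho$ with the transformed inverse metric. Using the identity $\tilde g_{\mu\nu}=\bar\phi^\rho_\mu\, g_{\rho\sigma}\,\bar\phi^\sigma_\nu$ already obtained in the proof of the previous proposition, one gets $\tilde g^{\lambda\rho}=\phi^\lambda_\alpha\, g^{\alpha\tau}\,\phi^\rho_\tau$. Contracting with the two surviving terms and simplifying with $\phi^\rho_\tau\bar\phi^\kappa_\rho=\delta^\kappa_\tau$ collapses the $g\cdot g^{-1}$ block in the first term to $\delta^\alpha_\gamma$, giving $2\phi^\lambda_\gamma\,\overset{\{g\}}{\nabla}_{[\mu}\bar\phi^\gamma_{\beta]}$, while the second term collapses to $2\phi^\lambda_\gamma\, g^{\gamma\sigma}K_{\sigma\eta[\mu}\bar\phi^\eta_{\beta]}$. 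Summing these is exactly \eqref{torsionprime}.

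I do not expect any real obstacle here: the calculation is purely algebraic and the only mildly delicate step is book-keeping of the (anti)symmetrisations, in particular verifying that the two $(\beta\mu)$-symmetric contributions in \eqref{tildeK} genuinely vanish under $[\beta\mu]$ and that the antisymmetriser commutes with the contractions with $\tilde g^{-1}$. For the converse direction, one observes that \eqref{torsionprime} uniquely determines $\tilde K$ via the usual formula $\tilde K_{\rho\beta\mu}=\tfrac{1}{2}(\tilde g_{\beta\sigma}\tilde T^\sigma_{\mu\rho}+\tilde g_{\mu\sigma}\tilde T^\sigma_{\beta\rho}+\tilde g_{\rho\sigma}\tilde T^\sigma_{\beta\mu})$, so that \eqref{torsionprime} is equivalent to \eqref{tildeK}, and hence by Theorem \ref{omegaprimeproject} equivalent to the projectability of $T\Phi(H)$ on $\tilde e(P)$.
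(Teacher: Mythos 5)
Your proposal is correct and follows essentially the same route as the paper: antisymmetrise \eqref{tildeK} in $(\beta,\mu)$ so that the two $(\beta\mu)$-symmetric terms drop, raise the first index with $\tilde g^{\lambda\rho}=\phi^\lambda_\alpha g^{\alpha\tau}\phi^\rho_\tau$ to obtain \eqref{torsionprime}, and recover the converse from the standard torsion--contorsion bijection (your inversion formula matches the paper's definition of $K_{\gamma\beta\mu}$). The only difference is that you spell out the index-raising and cancellation of the $\phi\bar\phi$ factors, which the paper leaves implicit.
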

\begin{proof}
The thesis is obtained by just taking the antisymmetric part of \eqref{tildeK}
\begin{align}\nonumber
\tilde K_{\rho[\beta\mu]}=\frac{1}{2}\tilde T_{\rho\beta\mu}=g_{\alpha\gamma}\bar\phi^\alpha_\rho\overset{\{g\}}{\nabla}_{[\mu}\bar\phi^\gamma_{\beta]}+\bar\phi^\sigma_\rho K_{\sigma\eta[\mu}\bar\phi^\eta_{\beta]}.
\end{align}
From this relation one can easily get an expression for the torsion, by multiplying for $\tilde g^{\rho\lambda}$
\begin{align}\nonumber
\tilde T^\lambda_{\beta\mu}=2\phi^\lambda_\gamma\overset{\{g\}}{\nabla}_{[\mu}\bar\phi^\gamma_{\beta]}+2\phi^\lambda_\gamma g^{\gamma\sigma}K_{\sigma\eta[\mu}\bar\phi^\eta_{\beta]}.
\end{align}
The contrary is easily proved by inserting \eqref{torsionprime} into the definition of the contorsion.
\end{proof}
As a particular case, one could decide to start from a projectable, torsionless connection $H$, thus a Levi-Civita connection for $e(P)$. In this case, it is clear that the transported connection $T\Phi(H)$ cannot be torsionless as the starting one, if we want it to be projectable on $\tilde e(P)$. Indeed its torsion must in general follow a particular case of equation \eqref{torsionprime} where the contorsion is zero. This means that one obtains the very compact result
\begin{align}
T'^\lambda_{\beta\mu}=\phi^\lambda_\gamma\left(\overset{\{g\}}{\nabla}_{\mu}\bar\phi^\gamma_{\beta}-\overset{\{g\}}{\nabla}_{\beta}\bar\phi^\gamma_{\mu}\right).
\end{align}
\begin{remark}\label{finalremark}
We saw how Theorem \ref{contorsionprime} explicitly selects, out of all possible $\tilde H$s, a projectable connection with a particular property: the pullback on $P$ through $(P,\tilde e)$ of such connection is the same as the one obtained from $H$ along $(P,e)$, as we briefly mentioned previously. Let us now show this by noticing that 
\begin{align}\nonumber
\omega^{ab}_\mu=e^b_\alpha\left([\{g\}^\alpha_{\beta\mu}+g^{\alpha\gamma}K_{\gamma\beta\mu}]e^\beta_c+\partial_\mu e^\alpha_c \right)=\tilde e^b_\alpha\left([\{\tilde g\}^\alpha_{\beta\mu}+\tilde g^{\alpha\gamma}\tilde K_{\gamma\beta\mu}(\tilde e,\phi, K)]\tilde e^\beta_c+\partial_\mu \tilde e^\alpha_c \right).
\end{align}
This equality can be proved by using \eqref{tildeK} and the explicit dependence of $\tilde g$ and $\tilde e$ in terms of $\Phi$ and it agrees with Remark \ref{remarkcoeff}, where we stated that the coefficients of the two connections on the frame bundle are the same, when written in the trivialization induced by the two spin frames.\\
As a consequence, the lifted horizontal vector on $P$ will be in both cases
\begin{align}
\hat\omega(v)|_p=v^\mu(\partial_\mu|_p-\omega^{ab}_\mu\sigma_{ab}|_p).
\end{align}
However, we can clearly see the difference between this second frame bundle approach and the one discussed in the previous Section: in that case the coefficients of $\hat\omega(v)$ were known from the beginning and they were in principle unrelated to any metric structure, whereas in this case we obtain an explicit expression in terms of $e$ and a contorsion $K$, or in terms of $\tilde e$ and $\phi$, depending on which expression is preferred. We will elaborate more on this last comment in the final Section.
\end{remark}
Having discussed similarities and differences between the two approaches, the one starting from the spin bundle and the one starting from the frame bundle, we now move on drawing up some conclusions.


\section{Conclusions and perspectives}
In this paper we reviewed spin frames, which are defined on a manifold and a signature, namely  $(M; r, s)$. In this framework, the metric is not a fundamental structure anymore as it is instead induced from spin frames. We indeed have a bundle map $g: F(P) \rightarrow Lor(M; r, s),$ which is locally given by \eqref{localg}. That is a global vertical bundle map, so it also maps sections (i.e.~spin frames) onto sections (i.e.~metrics of the selected signature). \\
Spin frames are more general than standard spin structures, they in fact allow different metrics, but not as general as the double covering formalism on $\tilde L(M)$. For example, take the two metrics 
\begin{align}\nonumber
g_1=& -dr^2 + r^2 d\Omega^2\\ \nonumber
g_2=& \cos(2\theta) (-dr^2 + r^2 d\theta^2) + r^2 \sin^2(\theta) d\varphi^2  + 2r\sin(2\theta) dr\cdot d\theta, 
\end{align}
both defined on $M={\mathbb R}^3-\{0\}$ and both written in spherical coordinates (the latter being the flat Minkowski metric $g_2 = dx^2+dy^2 - dz^2$ written in such coordinates). These two metrics cannot be obtained as spin frames on the same gauge bundle. As a matter of fact $g_2$ is obtained as a spin frame on the trivial bundle, while the gauge bundle for $g_1$ is not trivial (if it were trivial, there would exist a global frame that would induce a non-zero vector field on a sphere $S^2$). \\
The whole spin frame framework is clearly covariant with respect to spin transformations $Aut(P)$, as spin structures were, but allows the action of a different kind of transformations, namely spin frame transformations $Aut_V(L(M))$.\\
In particular we studied two approaches: in the first one we saw how spin frames are able to induce connections on the frame bundle as $\omega = Te(\hat \omega)$. In this view, the geometry on $M$ is described as in Palatini theories, in which the connection (as well as parallel transport and autoparallel trajectories) is a priori independent of the metric structure. \\
In the second approach, we described how, starting from the frame bundle, one can project and induce connections on $P$, which instead depend on the metric structure. In particular, we saw that spin frame transformations give a restriction on the type of torsions (or contorsions) allowed for another connection to project onto the induced othornomal subbudle $T(\Phi\circ e)(P)$. \\
Furthermore, notice that, in this approach, a shift by a contorsion type tensor of a connection projected onto an orthonormal subbundle always preserves this property. In particular, if $H$ and $T\Phi(H)$ are shifted by the same contorsion, they both induce the same connection $\hat H'$ on the spin bundle, for any tensor we add. We argued that this is a submodule in the module of connections.\\
At last, since spin frame transformations do not preserve the induced metric, they can relate Dirac equations for a given spin frame $(P,e)$ to Dirac equations for an inequivalent one $(P,\tilde e)$. Indeed, if we have a triple $(e,\hat H,\hat\lambda)$ and a spin frame transformation $\Phi$ the first Dirac equation
\begin{align}\nonumber
i e_a^\mu \gamma^a \overset{\hat\omega}{\nabla}_\mu \psi + \mu \psi=i e_a^\mu \gamma^a  \overset{\{g\}}{\nabla}_\mu \psi + \mu \psi+  \frac{i}{4}e_a^\mu  K^{bc}_\mu  \gamma^a [\gamma_b, \gamma_c]\psi=0
\end{align}
is mapped into $(\tilde e,\hat H,\hat\lambda)$
\begin{align}
i \tilde e_a^\mu \gamma^a \overset{\hat\omega}{\nabla}_\mu \psi + \mu \psi=i \tilde e_a^\mu \gamma^a  \overset{\{\tilde g\}}{\nabla}_\mu \psi + \mu \psi+  \frac{i}{4}e_a^\mu  \tilde K^{bc}{}_\mu  \gamma^a [\gamma_b, \gamma_c]\psi=0.
\end{align}
Notice that in the last step we chose one of the two possible expressions of the connection, as we were free to do as a consequence of Remark \ref{finalremark}. By using this expression, we see that we are both changing the metric structure appearing in the equations as well as the type of interaction (or we are adding one if we start from a Dirac equation without a contorsion contribution). Besides the geometrical result, we see that changing the metric structure means, from a physical point of view, changing the speed of light, which is described by null vectors with respect to a certain metric. In some contexts, where gravity is used to model graphene from a high energy point of view (see \cite{newpaper} and \cite{Andrianopoli2018}), one would like to introduce, besides the speed of light which naturally appears in the model, another characteristic speed, which would be then interpreted as the speed of quasiparticles in such material. In these situations spin frame transformations would provide a tool to introduce it.\\
Furthermore, the presence of a torsion (and then a contorsion term appearing in the Dirac equations) is related to defects, called dislocations, in the honeycomb lattice structure of graphene. See \cite{Katanaev:1992kh}, \cite{Kleinert} for a deeper analysis. \\
Our purpose will be then to investigate explicit examples, where to apply this construction and eventually to apply them to graphene, which does provide a lab to test Dirac equations on curved geometry, where all global aspects become relevant.

 \section*{Acknowledgement}

LF acknowledges the INFN grant QGSKY, the local research project {\it  Metodi Geometrici in Fisica Matematica e Applicazioni (2017)} of Dipartimento di Matematica of University of Torino (Italy), and the grant INdAM-GNFM. This article is based upon work from COST Action (CA15117 CANTATA), supported by COST (European Cooperation in Science and Technology).\\
The authors finally thank Anna Fino for useful discussions about the different formulations of the Dirac equations.


\appendix
\section{}
\begin{proposition} 
The Dirac equation is covariant with respect to automorphisms of $P$. 
\end{proposition}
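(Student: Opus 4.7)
The plan is to verify covariance by computing, in a local trivialization, how each ingredient of equation \eqref{diraceq} transforms under a principal automorphism $\Phi\in Aut(P)$, and then showing that these transformations compose to an overall invertible matrix multiplying the equation. I would work in the passive viewpoint, in which $\Phi$ amounts to replacing the local trivializing section $\overset{\scriptscriptstyle(\alpha)}{\sigma}$ of $P$ by $\overset{\scriptscriptstyle(\alpha)}{\sigma}\cdot\varphi$ for some $\varphi:U_\alpha\to Spin_e(r,s)$, combined with the coordinate change induced by the base diffeomorphism $\phi=\pi\circ\Phi\circ p^{-1}$, whose Jacobian I denote by $J^\mu_\nu$.

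First I would record the four relevant transformation laws. The spinor $\psi$ transforms as $\psi\mapsto \hat\lambda(\varphi)^{-1}\psi$ by the very definition of $S(P)=P\times_\lambda V$; the tetrad transforms as $e^\mu_a\mapsto J^\mu_\nu\, e^\nu_b\,\hat l(\varphi)^b{}_a$ by the equivariance of the spin frame, in the same way as in \eqref{changetrivialization}; the coordinate derivative obeys $\partial_\mu\mapsto J^\nu_\mu\partial_\nu$; and the connection coefficients $\omega^{ab}_\mu$ undergo the standard gauge law with an inhomogeneous $\varphi^{-1}\partial\varphi$ term. Combining the last three laws shows that $\hat\nabla_\mu\psi$ transforms as the components of a section of $T^*M\otimes S(P)$, namely $\hat\nabla_\mu\psi\mapsto J^\nu_\mu\,\hat\lambda(\varphi)^{-1}\,\hat\nabla_\nu\psi$, because the inhomogeneous piece of $\omega$ is precisely designed to cancel against the derivative of $\hat\lambda(\varphi)^{-1}$.

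The key algebraic input is the Clifford intertwining identity
\begin{equation*}
\hat\lambda(\varphi)\,\gamma^a\,\hat\lambda(\varphi)^{-1} \;=\; \bigl(\hat l(\varphi)^{-1}\bigr)^a{}_b\,\gamma^b,
\end{equation*}
which follows from $\gamma^a=\eta^{ab}\hat\lambda({\bf e}_b)$ together with the fact that conjugation by $\varphi\in Spin_e(r,s)$ on the Clifford generators realises the double-cover map $l$ into $SO_e(r,s)$. Plugging the four transformation laws into \eqref{diraceq}, the pair of Jacobians collapses via $J^\mu_\nu J^\rho_\mu=\delta^\rho_\nu$, the matrix $\hat l(\varphi)^b{}_a$ on the tetrad cancels against the matrix $(\hat l(\varphi)^{-1})^a{}_c$ produced by moving $\hat\lambda(\varphi)^{-1}$ through $\gamma^a$, and an overall factor of $\hat\lambda(\varphi)^{-1}$ can be collected on the left, yielding
\begin{equation*}
\hat\lambda(\varphi)^{-1}\bigl(i\,e^\mu_a\gamma^a\hat\nabla_\mu\psi+\mu\psi\bigr)=0,
\end{equation*}
which is equivalent to the original equation because $\hat\lambda(\varphi)^{-1}$ is invertible.

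The main obstacle I expect is purely the bookkeeping of indices and the careful placement of $\varphi$ versus $\varphi^{-1}$; the only genuinely nontrivial geometric input is the intertwining identity above, which is a standard Clifford-algebra fact. I would conclude by remarking that the equivalence between the active and passive viewpoints is automatic, since any principal automorphism of $P$ can be absorbed into a change of trivializing section by composition with the original one.
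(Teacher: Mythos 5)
Your proposal is correct and follows essentially the same route as the paper's own proof: both reduce to a local computation in which the only nontrivial input is the Clifford intertwining identity $\hat\lambda(\varphi)\gamma^a\hat\lambda(\varphi)^{-1}=(\hat l(\varphi)^{-1})^a{}_b\gamma^b$, after which the frame factor $\hat l(\varphi)$ cancels and an overall invertible factor of $\hat\lambda(\varphi)^{\pm1}$ is collected in front of the equation. The only difference is that you work in the passive viewpoint (change of trivializing section) while the paper works in the active one, a distinction the paper itself notes is immaterial; your extra care with the inhomogeneous term in the connection transformation is a point the paper states without computation.
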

\begin{proof}
In a local trivialization such automorphisms act as
\begin{align}
\Phi:P\rightarrow P: [x,S] \mapsto [x'= \phi(x), \varphi(x)\cdot S]
\end{align}
where $\phi:M\rightarrow M$ is the diffeomorphism on which the automorphism $\Phi$ is projecting, and $\varphi: U\rightarrow Spin_e(r, s)$ is a local function defined on the trivialization domain $U\subset M$.\\
The automorphism $\Phi$ acts on spinor fields as $\psi'(x')= \varphi(x)\cdot \psi(x)$ (the representation $\lambda$ being understood), on the connection $\hat H$ and hence on  the covariant derivative
$\hat\nabla' \psi' = \varphi \hat\nabla \psi $. Finally, it also acts on the frame as $e'_a= e_b l^b_a(\varphi^{-1})$, where $l^b_a$ is the matrix representation of the covering map.
Therefore, the Dirac equation for $\psi'$ is
\begin{align}\nonumber
i  e'^\mu_a \gamma^a  \hat\nabla'_\mu \psi' + \mu \psi' &=  i e_b^\mu l^b_a(\varphi^{-1}) \gamma^a \varphi   \hat\nabla_\mu \psi + \mu \varphi \psi =  i  e_b^\mu l^b_a(\varphi^{-1}) \varphi\varphi^{-1}\gamma^a \varphi  \hat \nabla_\mu \psi + \mu \varphi \psi \\ \nonumber
 &=  \varphi\Big( i e_b^\mu l^b_a(\varphi^{-1})  l^a_c(\varphi) \gamma^c  \hat\nabla_\mu \psi + \mu  \psi\Big)= \varphi\Big( i e_a^\mu \gamma^a    \hat\nabla_\mu \psi + \mu  \psi\Big),
\end{align}
where we used the definition of the covering map, namely $\varphi\gamma^a \varphi^{-1}=  l_c^a(\varphi^{-1}) \gamma^c$.\\
We then see that if $\psi$ is a solution of Dirac equation, $\psi'$ is a solution as well.
\end{proof}
\begin{proposition}\label{spintransformationsandmetricstructure}
Any spin transformation, vertical automorphism of the spin bundle $A:P\to P$ does not change the induced metric structure.
\end{proposition}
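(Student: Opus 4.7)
The plan is to unwind the definition of the induced metric under the new spin frame produced by the automorphism, and exploit that vertical automorphisms of $P$ project, via $\hat l = i\circ l$, into $SO_e(r,s)\subset GL(m,\mathbb{R})$, which by construction preserves the Minkowski-type form $\eta_{ab}$.

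First, I would formalize the setup. A vertical automorphism $A:P\to P$ acts on any local section as $A\circ\overset{\scriptscriptstyle(\alpha)}{\sigma}=\overset{\scriptscriptstyle(\alpha)}{\sigma}\cdot\varphi(x)$ for some local map $\varphi:U_\alpha\to Spin_e(r,s)$. One can then think of the ``transformed'' spin frame as $\tilde e:=e\circ A$; by Proposition \ref{propF(P)} this again defines a spin frame, and it induces a metric $\tilde g$ via \eqref{metricdefinition}. The goal is to prove $\tilde g=g$ as sections of $Lor(M)$.

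Second, using equivariance of $e$ one computes pointwise
\begin{align*}
\tilde e(p)=e(A(p))=e(p\cdot\varphi(x))=e(p)\cdot\hat l(\varphi(x)),
\end{align*}
so that if $e(p)=(x,\epsilon_a)$ then $\tilde e(p)=(x,\tilde\epsilon_a)$ with $\tilde\epsilon_a=\epsilon_b\,\hat l{}^b_a(\varphi(x))$. Because $\hat l=i\circ l$ factors through the covering $l:Spin_e(r,s)\to SO_e(r,s)$, the matrix $\hat l(\varphi)$ lies in $SO_e(r,s)$ and therefore satisfies $\hat l{}^c_a(\varphi)\,\eta_{cd}\,\hat l{}^d_b(\varphi)=\eta_{ab}$.

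Third, I would plug this into the definition of the induced metric. By \eqref{metricdefinition} applied to $\tilde e$, the tensor $\tilde g$ is the unique one with $\tilde g(\tilde\epsilon_a,\tilde\epsilon_b)=\eta_{ab}$. But the above identity gives
\begin{align*}
g(\tilde\epsilon_a,\tilde\epsilon_b)=\hat l{}^c_a(\varphi)\,\hat l{}^d_b(\varphi)\,g(\epsilon_c,\epsilon_d)=\hat l{}^c_a(\varphi)\,\hat l{}^d_b(\varphi)\,\eta_{cd}=\eta_{ab},
\end{align*}
so $g$ itself fulfils the defining property of $\tilde g$. Since $\{\tilde\epsilon_a\}$ is a basis of $T_xM$, the tensor satisfying that property is unique, hence $\tilde g=g$. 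Alternatively, working in the natural trivialization one may use the local formula \eqref{localg}, $g_{\mu\nu}=\epsilon^a_\mu\eta_{ab}\epsilon^b_\nu$, and verify the same cancellation between $\hat l(\varphi)$ and its transpose against $\eta$.

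I do not foresee a real obstacle: the content is purely definitional and relies only on the property $l(Spin_e(r,s))\subset SO_e(r,s)$, which is built into the spin frame Definition \ref{spinframe}. The only mild care point is to stress well-posedness of the argument, namely that \eqref{metricdefinition} defines a global tensor independent of the chosen point $p$ in the fiber; this follows from the $SO_e(r,s)$-invariance of $\eta$, exactly as in the computation above. This is in contrast with a general spin frame transformation $\Phi\in Aut_V(L(M))$, whose local matrix $\phi^a_b(x)$ is only $GL(m,\mathbb{R})$-valued and therefore in general fails to preserve $\eta$, which is precisely why spin frame transformations, unlike spin transformations, genuinely alter the induced metric as shown in Section \ref{sec3}.
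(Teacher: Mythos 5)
Your proposal is correct and follows essentially the same route as the paper's proof: both use equivariance of $e$ to show that a vertical automorphism moves $e(p)$ within its $SO_e(r,s)$-orbit via $\hat l$ of a $Spin_e(r,s)$-valued quantity, and then the $\eta$-invariance under $SO_e(r,s)$ gives $\tilde g=g$ (the paper does the cancellation in the local formula \eqref{localg}, you phrase it via uniqueness of the tensor satisfying \eqref{metricdefinition}, which is only a cosmetic difference). The one small imprecision is writing $A(p)=p\cdot\varphi(x)$ for a general fibre point: for $p=\overset{\scriptscriptstyle(\alpha)}{\sigma}\cdot g$ one actually has $A(p)=p\cdot\bigl(g^{-1}\varphi(x) g\bigr)$, but since this conjugate still lies in $Spin_e(r,s)$ the argument is unaffected.
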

\begin{proof}
We start the proof by noticing that one can define the metric by just using induced sections: indeed, by writing $p=\overset{\scriptscriptstyle(\alpha)}{\sigma}\cdot\overset{\scriptscriptstyle(\alpha)}{g}$, one sees that $$e(p)=(x,\epsilon_a)=(x,\cdot\overset{\scriptscriptstyle(\alpha)}{e}_b)\hat l^b_a(\overset{\scriptscriptstyle(\alpha)}{g}) $$ and then the definition of the metric can be restated as
\begin{align}\nonumber
g=\overset{\scriptscriptstyle(\alpha)}{e}{}^a_\mu\eta_{ab}\overset{\scriptscriptstyle(\alpha)}{e}{}^b_\nu dx^\mu\otimes dx^\nu.
\end{align}
This last expression is actually independent of the chosen trivialization as it can be easily shown by taking a different section of the spin bundle $\overset{\scriptscriptstyle(\beta)}{\sigma}$ and by using \eqref{changetrivialization}.\\
We now see that any vertical automorphism acts on the point $p\in P$ as
\begin{align}\nonumber
A(p)=A(\overset{\scriptscriptstyle(\alpha)}{\sigma}\cdot\overset{\scriptscriptstyle(\alpha)}{g})=\overset{\scriptscriptstyle(\alpha)}{\sigma}\cdot A\cdot\overset{\scriptscriptstyle(\alpha)}{g}
\end{align}
and its image through the spin frame is given by 
\begin{align}\nonumber
(x,\epsilon'_a)=e(A(p))=e(\overset{\scriptscriptstyle(\alpha)}{\sigma}\cdot A\cdot\overset{\scriptscriptstyle(\alpha)}{g})=e(\overset{\scriptscriptstyle(\alpha)}{\sigma})\cdot\hat l(A\cdot\overset{\scriptscriptstyle(\alpha)}{g})=(x,\overset{\scriptscriptstyle(\alpha)}{e}_b)\hat l^b_a(A\cdot\overset{\scriptscriptstyle(\alpha)}{g}).
\end{align}
At last, we obtain our thesis by computing the induced metric 
\begin{align}\nonumber
g'&=\epsilon'^a_\mu\eta_{ab}\epsilon'^b_\nu dx^\mu\otimes dx^\nu=\overset{\scriptscriptstyle(\alpha)}{e}{}_\mu^c\hat l_c^a(\overset{\scriptscriptstyle(\alpha)}{\bar g}\cdot \bar A)\eta_{ab}\hat l_d^b(\overset{\scriptscriptstyle(\alpha)}{\bar g}\cdot \bar A)\overset{\scriptscriptstyle(\alpha)}{e}{}_\nu^d dx^\mu\otimes dx^\nu \\ \nonumber
&=\overset{\scriptscriptstyle(\alpha)}{e}{}_\mu^c\eta_{cd}\overset{\scriptscriptstyle(\alpha)}{e}{}_\nu^d dx^\mu\otimes dx^\nu=g,
\end{align}
where we indicated with bars the inverses of the group elements of $Spin(\eta)$ and where in the third step we used that $\hat l\ ^b_a(\overset{\scriptscriptstyle(\alpha)}{\bar g}\cdot \bar A)\in SO_e(r,s)\hookrightarrow GL(m,\mathbb R)$
\end{proof}


\end{document}